\newtheorem{theorem}{Theorem}[section]
\newtheorem{lemma}[theorem]{Lemma}
\definecolor{bfonce}{rgb}{0.,0.,0.8}	
\definecolor{bclair}{rgb}{0.87,0.92,1.}
\definecolor{orangec}{rgb}{1.,0.6,0.}
\newcommand{\edge}{\sigma}
\newcommand{\edged}{\epsilon}
\newcommand{\edgeprim}{\tau}
\newcommand{\edges}{{\mathcal E}}
\newcommand{\edgesint}{{\mathcal E}_{{\rm int}}}
\newcommand{\edgesext}{{\mathcal E}_{{\rm ext}}}
\newcommand{\mesh}{{\mathcal M}}
\newcommand{\tdisc}{{\mathcal T}}
\newcommand{\bfn}{{\boldsymbol n}}
\newcommand{\bfu}{{\boldsymbol u}}
\newcommand{\bfx}{{\boldsymbol x}}
\newcommand{\gradi}{{\boldsymbol \nabla}}
\newcommand{\dive}{{\rm div}}
\newcommand{\dt}{\,\mathrm{d}t}
\newcommand{\dx}{\,\mathrm{d}\bfx}
\newcommand{\dedge}{\,\mathrm{d}\gamma}
\newcommand{\exm}{^{(m)}}
\newcommand{\bli}{\begin{list}{-}{\itemsep=1ex \topsep=1ex \leftmargin=0.5cm \labelwidth=0.3cm \labelsep=0.2cm \itemindent=0.cm}}
\newcommand{\Ekin}{E_{\rm k}}
\newcommand{\Gkin}{G_{\rm k}}
\newcommand{\nkedgei}{n_{K,\edge,i}}
\newtheorem{remark}{Remark}[section]
\newcounter{corr}
\definecolor{violet}{rgb}{0.580,0.,0.827}
\newcommand{\corr}[3]{\typeout{Warning : a correction remains in page
\thepage}
				\stepcounter{corr}        
				{\color{blue}\ifmmode\text{\,\sout{\ensuremath{#1}}\,}\else\sout{#1}\fi}
       {\color{red}#2}
       {\color{violet} #3}}
\begin{document}
\title[Conservative consistent staggered FV methods for the Euler eqns]{Conservativity and weak consistency of a class of staggered finite volume methods \\ for the Euler equations}
\author{R. Herbin}
\address{I2M, CNRS and Universit\'e d'Aix-Marseille, France, \texttt{raphaele.herbin@univ-amu.fr} }

\author{J.-C. Latch\'e}
\address{Institut de Radioprotection et de S\^{u}ret\'{e} Nucl\'{e}aire (IRSN), PSN-RES/SA2I, \\Cadarache, St-Paul-lez-Durance, 13115, France,\texttt{jean-claude.latche@irsn.fr}}
 
\author{S. Minjeaud}
\address{Laboratoire Jean Dieudonn\'e, Universit\'e de Nice-Sophia Antipolis, France, \texttt{sebastian.minjeaud@unice.fr}}

\author{N. Therme}
\address{CEA/CESTA 33116, Le Barp, France, \texttt{nicolas.therme@cea.fr}}

\begin{abstract}
We address here a class of staggered schemes for the compressible  Euler equations~; this scheme was introduced in recent papers  and possesses the following features:  upwinding is performed with respect to the material velocity only and the internal energy balance is solved, with a correction term designed on consistency arguments.
These schemes have been shown in previous works to preserve the convex of admissible states and have been extensively tested numerically.
The aim of the present paper is twofold: we derive a local total energy equation satisfied by the solutions, so that the schemes are in fact conservative, and we prove that they are consistent in the Lax-Wendroff sense.
\end{abstract}

\subjclass[2010]{65M08}

\date{\today}

\maketitle

%
%
\section{Introduction}

The aim of this paper is to prove some conservativity and consistency properties of a class of numerical schemes for the compressible Euler equations which model the flow of an ideal gas and read:
\begin{subequations}
\begin{align}\label{eq:pb_mass} &
\partial_t \rho + \dive( \rho\, \bfu) = 0,
\\[1ex] \label{eq:pb_mom} &
\partial_t (\rho\, \bfu) + \dive(\rho\, \bfu \otimes \bfu) + \gradi p= 0,
\\[1ex] \label{eq:pb_Etot} &
\partial_t (\rho\, E) + \dive(\rho \, E \, \bfu) + \dive ( p \, \bfu )=0,
\\ \label{eq:pb_etat} &
 p=(\gamma-1)\, \rho\, e, \quad E=E_k+e, \quad E_k=\frac 1 2|\bfu|^2,
\end{align} \label{eq:pb}
\end{subequations}
where $t$ is the time, $\rho$, $\bfu$, $p$, $E$,  $e$ and $E_k$ denote the density, velocity, pressure, total energy, internal energy and kinetic energy respectively, and $\gamma > 1$ is the ratio of specific heats.
The notation $|\cdot|$ is used both for the Euclidean norm of a vector of $\xR^d$ or for the absolute value of a real number.
The space--time domain is denoted by $\Omega \times (0,T)$, where $\Omega$ is an open bounded connected subset of $\xR^d$, $1\leq d \leq 3$, and $T< +\infty.$  
System \eqref{eq:pb} is supplemented  by the boundary condition  $\bfu \cdot \bfn=0$ {\it a.e.} on $\partial\Omega$, $\bfn$ denoting the unit normal vector to the boundary, and by initial conditions for $\rho$, $e$ and $\bfu$, denoted by $\rho_0$, $e_0$ and $\bfu_0$ respectively, with $\rho_0 \in L^\infty(\Omega), \rho_0 >0$ and $e_0 \in L^\infty(\Omega), e_0 >0$.
 
\medskip
The use of staggered space discretization for compressible flows is classical: indeed, the well-known Marker-And-Cell (MAC) scheme of \cite{har-65-num,har-71-num}, was followed by numerous works, see e.g. \cite{iss-85-sol,kar-89-pre,mcg-90-sho,bij-98-uni,yoo-99-uni,ver-03-sym,wal-02-sem,kwa-09-met} or \cite{wes-01-pri} for a textbook);  a salient feature of staggered discretisations is their native stability for the numerical simulation of incompressible flows, thus yielding asymptotic preserving schemes in the low Mach number regime.
The decoupling of the fluxes between material velocity and pressure--like part has been implemented in industrial hyprodynamics codes for quite some time for instance in Lagrange-remap schemes, see e.g. \cite{von-50-met,ben-92-com,car-98-con}.
Following this line of thought, some numerical schemes for the Euler equations on general staggered grids have been recently developed \cite{her-14-ons,gra-16-unc,her-18-cons,gas-18-mus} (see also \cite{dak-19-hig} for a similar approach in the Lagrange-projection framework), with share the following features:
\bli
\item a special attention is paid to the discretization of the momentum convection in Equation \eqref{eq:pb_mom} to ensure that discrete solutions satisfy a discrete kinetic energy balance, \ie\ a discrete version of
\begin{equation} \label{eq:pb_ekin}
\partial_t (\rho\, E_k) + \dive(\rho \, E_k \, \bfu) + \bfu.\cdot \gradi p =-\mathcal R, \quad \mathcal R \geq 0,
\end{equation}
where the expression of the discrete remainder $\mathcal R$ is established from the discrete mass and momentum balance equations (see Section \ref{sec:Ekin}.
This quantity $\mathcal R$ may be seen as the dissipation associated to the numerical diffusion.
\item the scheme solves the internal energy balance, \ie\ the relation which may be formally derived at the continuous level by subtracting \eqref{eq:pb_ekin} from \eqref{eq:pb_Etot}:
\begin{equation} \label{eq:pb_eint}
\partial_t (\rho\, e) + \dive(\rho \, e \, \bfu) + p\, \dive \bfu =\mathcal R.
\end{equation}
\end{list}
 These features are also at the heart of a recently developed collocated finite volume scheme \cite{her-19-cel}, and yield an essential outcome: a suitable discretization of \eqref{eq:pb_eint} ensures that $e \geq 0$ and that an entropy inequality  can be derived \cite{gal-19-ent}, with a remarkably unsophisticated definition of the convection fluxes: indeed, upwinding is performed equation per equation, with respect to the material velocity only, so without any exact or approximate solution of Riemann problems at interfaces, thus yielding a natural extension to various equations of state and to more complex problems, such as the reactive Euler equations.
Here, the final goal is to use rather a staggered arrangement of the unknowns to obtain a (class of) schemes that are natively able to cope with all Mach flows \cite{gra-16-unc,her-19-low}; however, passing from the collocated to the staggered framework renders the application of the above-described strategy much more intricate: indeed, the discrete kinetic energy balance is posed on the dual mesh while the internal energy balance is posed on the primal one, and a simple addition of the discrete analogues of \eqref{eq:pb_ekin} and \eqref{eq:pb_eint} is no more possible.
A crucial question when designing the various steps of the scheme is whether the resulting scheme is conservative and consistent;
here in particular, special attention must be paid to the convection operator for the velocity, which is designed in order to respect stability properties only (\ie, as already mentioned, so as to allow to derive a discrete kinetic energy balance).

\medskip
The purpose of this paper is to obtain (positive) answers to these questions: we show that a discrete convection operator on the primal mesh may be derived from the staggered operator for a "cell average" of the face variables (\ie, in this context, either the velocity components or the kinetic energy), and that this convection operator satisfies a Lax-Wendroff consistency property.
To fix ideas, we will work with specific choices for the space and time discretizations, namely an explicit scheme based, with velocity unknowns belonging to the so-called Rannacher-Turek low-order finite element space \cite{ran-92-sim}; however, the arguments invoked below are rather general.
The extension to simplicial meshes, with non conforming P1 velocity unknowns is straightforward.
A similar construction is also possible for the so-called Marker And Cell (MAC) scheme; in particular, the derivation of a convection operator on primal meshes from its staggered analogue is performed, with this latter discretization, in \cite{llo-18-sch}.

\medskip
The schemes studied here are implemented in the open-source software CALIF$^3$S \cite{califs} developed at the French Institut de Radioprotection et de S\^uret\'e Nucl\'eaire (IRSN); they have been extensively tested  numerically in \cite{her-14-ons,gra-16-unc,her-18-cons,gas-18-mus} and are routinely used in nuclear safety applications.
We thus do not reproduce these numerical tests here.
Note that the proof of consistency given below applies in particular to the MUSCL-like scheme which was introduced in \cite{gas-18-mus}.

%
%
\section{Meshes and unknowns}\label{sec:mesh}

Let $\Omega$ be an open bounded polyhedral set of $\xR^d$, $d \ge 1$.
We suppose given a mesh $\mesh$ of $\Omega$, {\it i.e.} a finite collection of compact connected sets $K\in\mesh$, called cells, such that
\[
 \overline{\Omega} = \bigcup_{K\in\mesh} K\qquad\text{and}\qquad \mathring{K}\cap\mathring{L} = \emptyset,\quad \forall K\in\mesh,\ \forall L\in\mesh,\ K\neq L.
\]
The cells are assumed to be intervals (if $d=1$), quadrangles (if $d=2$), or hexahedra (if $d=3$).
In the multidimensional case, the boundary of each cell $K\in\mesh$ is a union of 4 (if $d=2$) or 6 (if $d=3$) parts of hyperplanes of $\xR^d$, which are called faces for short in the following, for $d=2$ as for $d=3$; the boundary $\partial K$ of $K$ reads $\partial K=\cup_{\edge \in \edges_K} \edge$ where $\edges_K$ is the set of the faces of $K$.
We denote by $\edges$ the set of all faces, namely $\edges = \cup_{K \in \mesh} \edges_K $.
We suppose that the mesh is conforming in the usual sense, \ie\ that any cells $K$ and $L$ of the mesh is either disjoint (possibly up to common vertex or edge) or share a whole face.
We denote by $\edgesint$ the set of elements $\edge$ of $\edges$ such that there exist $K$ and $L$ in $\mesh$ ($K \ne L$) such that  $\edge \in \edges_K \cap \edges_L$; such a face $\edge$ is denoted by $\edge=K|L$.
The set of faces located on the boundary of $\Omega$, \ie\ $\edges \setminus \edgesint$, is denoted by $\edgesext$.
For $K \in \mesh$ we denote by $h_K$ the diameter of $K$.
The size of the mesh $\mesh$ is $h_\mesh = \max\{h_K, K \in \mesh\}$.
For $K\in\mesh$ and $\edge \in \edges_K$, we denote by $\bfn_{K,\edge}$ the unit normal vector to $\edge$ outward $K$.
If $A$ is a measurable set of $\xR^d$ or, for $d>1$, of $\xR^{d-1}$, we denote by $|A|$ the Lebesgue measure of $A$.

\medskip
We also introduce now a dual mesh, that is a new partition of $\Omega$ indexed by the elements of $\edges$, namely $\Omega=\cup_{\edge \in \edges} D_\edge$.
For an internal face $\edge=K|L$, the set $D_\edge$ is supposed to be a subset of $K \cup L$ and we define $D_{K,\edge}=D_\edge \cap K$, so that $D_\edge=D_{K,\edge} \cup D_{L,\edge}$ (see Figure \ref{fig:space_disc}); for an external face $\edge$ of a cell $K$, $D_\edge$ is a subset of $K$, and $D_\edge=D_{K,\edge}$.
The cells $(D_\edge)_{\edge \in \edges}$ are referred to as the dual or diamond cells, and $D_{K,\edge}$ as half dual cells or half diamond cells.
For a rectangular or a rectangular paralleliped, we define $D_{K,\edge}$ as the cone having the mass center of $K$ as vertex and the face $\edge$ as basis; this definition is extended to general primal meshes by supposing that $|D_{K,\edge}|$ is still equal to $|K|$ divided by the number of the faces of $K$, denoted by $\zeta$:
\begin{equation} \label{hyp:DKsigma-zeta}
|D_{K,\edge}|=\frac{|K|}\zeta,
\end{equation}
and that the sub-cells connectivities (\ie\ the way the half-dual cells share a common face) is left unchanged.
In one space dimension, $D_{K,\edge}$ is half of the cell $K$ adjacent to $\edge$.
The faces of the dual cells are referred to as dual faces; we denote by $\edges(D_\edge)$ and $\edges(D_{K,\edge})$ the set of faces of $D_\edge$ and $D_{K,\edge}$ respectively (so $\edge \in \edges(D_{K,\edge})$) and by $\edged = \edge|\edgeprim$ the dual face separating $D_\edge$ and $D_{\edgeprim}$.
Note that the actual geometry of the dual cells or dual faces does not need to be specified (and a dual cell may not be a polytope, a dual face being possibly not included in an hyperplane of $\xR^d$), and we will see in the following that it is not required for the definition of the scheme.

\medskip
The scalar unknowns are associated to the cells of the mesh, and read $(\rho_K)_{K\in\mesh}$, $(p_K)_{K\in\mesh}$ and $(e_K)_{K\in\mesh}$ for the density, the pressure and the internal energy respectively.
The velocity unknowns are associated to the faces and read $(\bfu_\edge)_{\edge \in \edges}$, with $\bfu_\edge=(u_{1,\edge},\dots,u_{d,\edge})^t$ the velocity vector associated to $\edge$.

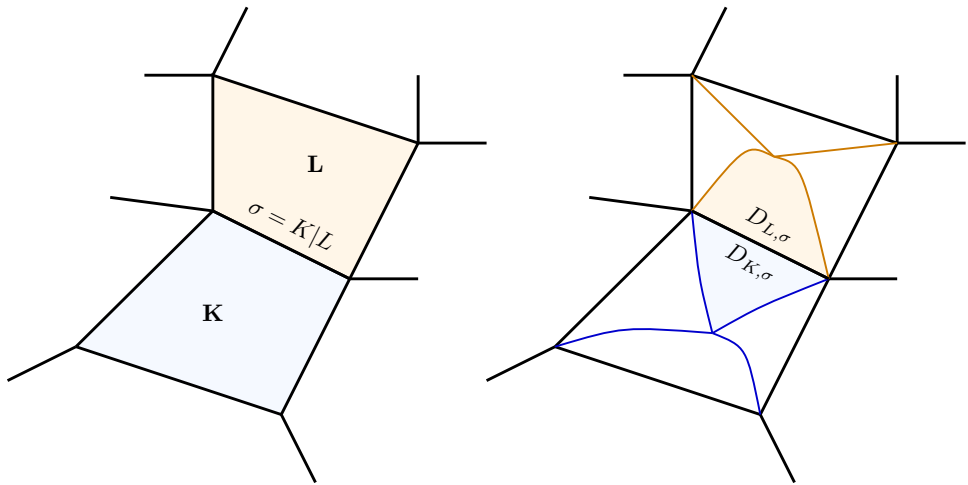
\begin{figure}[h!]
\begin{center}
\scalebox{0.9}{
\begin{tikzpicture} 
\fill[color=bclair,opacity=0.3] (1.,2.) -- (4.,1.) -- (5.,3.) -- (3.,4.) -- (1.,2.);
\fill[color=orangec!30!white,opacity=0.3] (5.,3.) -- (6.,5.) -- (3.,6.) -- (3.,4.);
\draw[very thick, color=black] (1.,2.) -- (4.,1.) -- (5.,3.) -- (3.,4.) -- (1.,2.);
\draw[very thick, color=black] (5.,3.) -- (6.,5.) -- (3.,6.) -- (3.,4.);
\draw[very thick, color=black] (1.,2.) -- (0.,1.5);
\draw[very thick, color=black] (4.,1.) -- (4.5,0.);
\draw[very thick, color=black] (3.,4.) -- (1.5,4.2);
\draw[very thick, color=black] (5.,3.) -- (6,3);
\draw[very thick, color=black] (6.,5.) -- (6.,6.);
\draw[very thick, color=black] (6.,5.) -- (7.,5.);
\draw[very thick, color=black] (3.,6.) -- (3.5,7.);
\draw[very thick, color=black] (3.,6.) -- (2.,6.);
\node at (3., 2.5){$\mathbf K$}; \node at (4.5, 4.7){$\mathbf L$};
\draw[very thick, color=black] (5.,3.) -- (3.,4.) node[midway,sloped,above]{$\edge=K|L$};
\fill[color=bclair,opacity=0.3] (10.,4.) .. controls (10.1,3.) .. (10.3,2.2) .. controls (11.,2.6) .. (12.,3.);
\fill[color=orangec!30!white,opacity=0.3] (10.,4.) .. controls (10.8,5.) .. (11.2,4.8) .. controls (11.6,4.7) .. (12.,3.);
\draw[very thick, color=black] (8.,2.) -- (11.,1.) -- (12.,3.) -- (10.,4.) -- (8.,2.);
\draw[very thick, color=black] (12.,3.) -- (13.,5.) -- (10.,6.) -- (10.,4.);
\draw[very thick, color=black] (8.,2.) -- (7.,1.5);
\draw[very thick, color=black] (11.,1.) -- (11.5,0.);
\draw[very thick, color=black] (10.,4.) -- (8.5,4.2);
\draw[very thick, color=black] (12.,3.) -- (13,3);
\draw[very thick, color=black] (13.,5.) -- (14.,5.);
\draw[very thick, color=black] (13.,5.) -- (13.,6.);
\draw[very thick, color=black] (10.,6.) -- (10.5,7.);
\draw[very thick, color=black] (10.,6.) -- (9.,6.);
\draw[thick, color=bfonce] (10.,4.) .. controls (10.1,3.) .. (10.3,2.2);
\draw[thick, color=bfonce] (10.3,2.2) .. controls (11.,2.6) .. (12.,3.);
\draw[thick, color=bfonce] (8.,2.) .. controls (9.,2.3) .. (10.3,2.2);
\draw[thick, color=bfonce] (11.,1.) .. controls (10.8,2.) .. (10.3,2.2);
\draw[thick, color=orangec!80!black] (10.,4.) .. controls (10.8,5.) .. (11.2,4.8) .. controls (11.6,4.7) .. (12.,3.);
\draw[thick, color=orangec!80!black] (13.,5.) -- (11.2,4.8);
\draw[thick, color=orangec!80!black] (10.,6.) -- (11.2,4.8);
\draw[very thick, color=black] (12.,3.) -- (10.,4.) node[midway,sloped,above]{$D_{L,\edge}$} node[midway,sloped,below]{$D_{K,\edge}$};
\end{tikzpicture}
}
\end{center}
\caption{Mesh and associated notations.}
\label{fig:space_disc}
\end{figure}
%
%
\section{General form of the scheme}\label{sec:scheme}

For the sake of simplicity, we focus here on an explicit-in-time form of the scheme.
Let $(t_n)_{0\leq n \leq N}$, with $0=t_0 < t_1 <\ldots < t_N=T$, define a partition of the time interval $(0,T)$, which we suppose uniform for the sake of simplicity, and let $\delta t=t_{n+1}-t_n$ for $0 \leq n \leq N-1$ be the (constant) time step.
We consider a fractional step scheme, which involves only explicit steps and reads in its fully discrete form, for $0\leq n \leq N-1$:
\begin{subequations}\label{eq:scheme}
\begin{align}
\displaystyle \label{eq:scheme_mass} &
\forall K \in \mesh, \;
\dfrac{|K|}{\delta t}(\rho^{n+1}_K-\rho^n_K) + \sum_{\edge \in\edges(K)} F_{K,\edge}^n=0,
\displaybreak[1]\\ \label{eq:scheme_Eint} &
 \forall K \in \mesh, \;
 \dfrac{|K|}{\delta t}(\rho^{n+1}_K e^{n+1}_K-\rho^n_K e^n_K) + \sum_{\edge \in\edges(K)} F_{K,\edge}^n e^n_\edge
  +|K|\ p^n_K \,(\dive \bfu)^n_K =S^n_K,
 \displaybreak[1]\\[2ex] \label{eq:scheme_eos} &
 \forall K \in \mesh, \;
  p^{n+1}_K=(\gamma-1)\ \rho^{n+1}_K\ e^{n+1}_K,
 \displaybreak[1]\\[2ex]\displaystyle \nonumber &
 \mbox{For } 1 \leq i \leq d,\ \forall \edge \in \edges,
 \\[1ex] \label{eq:scheme_mom} &  \displaystyle \phantom{\forall K \in \mesh, \;}
 \dfrac{|D_\edge|}{\delta t}(\rho^{n+1}_{D_\edge} u^{n+1}_{\edge,i}-\rho^n_{D_\edge} u^n_{\edge,i}) 
 + \sum_{\edged \in\edges(D_\edge)} F_{\edge,\edged}^n u^n_{\edged,i} 
 + |D_\edge|\, (\gradi p)^{n+1}_{\edge,i}
 =0,
\end{align}
\end{subequations}
where the terms introduced for each discrete equation are now defined.
The mass flux $F_{K,\edge}^n$ (equations \eqref{eq:scheme_mass} and \eqref{eq:scheme_Eint}) vanishes on external edges to comply with the boundary conditions and, for $\edge \in \edgesint$, $\edge=K|L$, we have
\begin{equation}
	F_{K,\edge}^n = |\edge|\ \rho_\edge^n \, \bfu_\edge^n \cdot \bfn_{K,\edge},
	\label{primal-mass-flux}
\end{equation}
where $\rho_\edge^n$ is an approximation of the density at the face which is a convex combination of $\rho_K^n$ and $\rho_L^n$, which does not need to be specified further for the matter at hand in this paper.
Similarly, the internal energy $e^n_\edge$ at the face $\edge=K|L$ in Equation \eqref{eq:scheme_Eint} is supposed to be a convex combination of $e^n_K$ and $e^n_L$.
Note that this assumption allows us to deal both with the upwind scheme or with the quasi 2nd-order scheme using a MUSCL-like choice introduced in \cite{gas-18-mus}.
 
In the same relation, the quantity $(\dive \bfu)^n_K$ reads:
\[
(\dive \bfu)^n_K = \frac 1 {|K|} \sum_{\edge \in \edges(K)} |\edge|\ \bfu_\edge^n \cdot \bfn_{K,\edge}.
\]
The pressure gradient in the momentum balance equation \eqref{eq:scheme_mom} is obtained by transposition of the divergence operator with respect to the $L^2$ inner product, and reads for $\edge=K|L$
\[
(\gradi p)^{n+1}_\edge = \frac 1 {|D_\edge|} \ (p_L^{n+1}-p_K^{n+1})\ \bfn_{K,\edge},
\]
and $(\gradi p)^{n+1}_{\edge,i}$ stands for the $i^{\rm th}$ component of $(\gradi p)^{n+1}_\edge$.
The momentum convection operator (\ie\ the first two terms of Equation \eqref{eq:scheme_mom}) are detailed in Section \ref{sec:conv}.
The term $S^n_K$ at the right-hand side of Equation \eqref{eq:scheme_Eint} is a corrective term introduced for consistency, with the aim to compensate the discrete dissipation due to the numerical diffusion in the discrete momentum balance equation \eqref{eq:scheme_mom}; its precise expression is given in Section \ref{sec:Ekin}.
A numerical diffusion term, not featured here, may be introduced in applications in the momentum balance equation to stabilize the scheme.
The initial approximations for $\rho$, $e$ and $\bfu$ are given by the average of the initial conditions $\rho_0$ and $e_0$ on the primal cells and of $\bfu_0$ on the dual cells:
\begin{equation}\label{eq:inicond}
\begin{array}{ll} \displaystyle
\forall K \in \mesh, \quad \rho^0_K = \frac 1 {|K|} \int_K \rho_0(\bfx) \dx,
\quad \mbox{ and } e^0_K = \frac 1 {|K|} \int_K e_0(\bfx) \dx,
\\[4ex] \displaystyle
\forall \edge \in \edges, \quad
\bfu^0_{\edge} = \frac 1 {|D_\edge|} \int_{D_\edge} \bfu_0(\bfx) \dx.
\end{array}
\end{equation}
%
%
\section{The discrete convection on the dual mesh}\label{sec:conv}

Let $z$ be a variable associated to the faces of the mesh, \ie\ to the degrees of freedom $(z_\edge^n)_{\edge\in \edges,\ 0\leq n \leq N}$.
The discrete convection operator for $z$ (\ie\ the discretization of $\partial_t(\rho z) + \dive(\rho z \bfu)$) is defined by:
\begin{equation}\label{eq:conv_d_z}
{\mathcal C}_\edge^n z = \frac{|D_\edge|}{\delta t}\ (\rho_{D_\edge}^{n+1} z_\edge^{n+1}-\rho_{D_\edge}^n z_\edge^n)
+ \sum_{\edged \in \edges(D_\edge)} F_{\edge,\edged}^n\ z_\edged^n,
\end{equation}
where, for $\edged=\edge|\edgeprim$, $z_\edged^n$ is supposed to be a convex combination of $z_\edge^n$ and $z_{\edgeprim}^n$.
The density $\rho_{D_\edge}^n$ at the face $\edge$ is a weighted combination of the density in the neighbouring primal cells, given by, for $0\leq n \leq N$:
\begin{equation}\label{eq:rhoface}
\begin{array}{ll} \displaystyle
\rho_{D_\edge}^n=\frac 1 {|D_\edge|}\ \bigl( |D_{K,\edge}|\ \rho_K^n + |D_{K,\edge}|\ \rho_K^n \bigr), & \mbox{ for } \edge =K|L \in \edgesint 
\\[2ex]
\rho_{D_\edge}^n=\rho_K^n, & \mbox{ for } \edge \in \edgesext, \edge \in \edges(K).
\end{array}
\end{equation}
We suppose that the mass fluxes through the dual edges $\edged$ are obtained thanks to a mass balance over the half-diamond cells, \ie\ satisfy, for $\edge \in \edges(K)$:
\begin{equation}\label{eq:mass_hdiam}
\sum_{\edged \in \edges(D_{K,\edge})} F_{\edge,\edged}^n = \frac 1 \zeta \sum_{\edgeprim \in \edges(K)} F_{K,\edgeprim}^n,
\end{equation}
where we recall that $\zeta$ stands for the number of faces of $K$, so $\zeta =2$ in one space dimension, $\zeta=4$ in two space dimensions and $\zeta=6$ in three space dimensions.
In addition, they are also supposed to satisfy, for $K \in \mesh$, $\edge \in \edges(K)$ and $\edged \subset K$, $\edged \in \edges(D_\edge)$:
\begin{equation}\label{eq:fluxd}
F_{\edge,\edged} = \sum_{\edgeprim \in \edges(K)} \xi_{\edged,\edgeprim}\, F_{K,\edgeprim},
\end{equation}
where the coefficients $\xi_{\edged,\edgeprim}$ are fixed real numbers, \ie\ independent of the cell $K$ and the mesh $\mesh$.
Finally, for diamond cells adjacent to a boundary (which are in fact reduced to a half-diamond cell), one dual face (at least) is an external face, and coincides with a primal face, let us say $\edge$; the mass flux through this face is supposed to vanish (\ie\ to take the same value as the primal mass flux $F_{K,\edge}$, with $K$ the cell adjacent to $\edge$).
The expression of the coefficients $(\xi_{\edged,\edgeprim})$ may be found in \cite{ans-11-anl}  for quadrangles in two space dimensions and in \cite{lat-19-dis} for various meshes (including hexahedra in three space dimensions).

\medskip
The essential outcome of the construction of the dual mass fluxes lies in the fact that \eqref{eq:rhoface} and \eqref{eq:mass_hdiam}, together with the assumption \eqref{hyp:DKsigma-zeta} imply that a mass balance holds over the dual cells:
\begin{equation}\label{eq:mass_diam}
\forall \edge \in \edges,\quad
\frac{|D_\edge|}{\delta t}\ (\rho_{D_\edge}^{n+1} - \rho_{D_\edge}^n) + \sum_{\edged \in \edges(D_\edge)} F_{\edge,\edged}^n = 0.
\end{equation}
%
%
\section{Discrete kinetic energy balance}\label{sec:Ekin}
Thanks to this relation, the scheme satisfies a discrete kinetic energy balance, which is stated in the following lemma (see \cite{her-18-cons} for a proof).

\begin{lemma}[Discrete kinetic energy balance]\label{lmm:kin}
A solution to the system \eqref{eq:scheme} satisfies the following equality, for $1 \leq i \leq d$, $\edge \in \edges$ and $0 \leq n \leq N-1$:
\begin{multline}\label{eq:Ekin}
\dfrac 1 2\, \dfrac{|D_\edge|}{\delta t} \Bigl[ \rho^{n+1}_{D_\edge} (u^{n+1}_{\edge,i})^2-\rho^n_{D_\edge} (u^n_{\edge,i})^2 \Bigr]
+ \dfrac 1 2\ \sum_{\edged \in \edges(D_\edge)} F_{\edge,\edged}^n\ (u^n_{\edged,i})^2
\\
+ |D_\edge|\, (\gradi p)^{n+1}_{\edge,i}\ u^{n+1}_{\edge,i} =
-R^{n+1}_{\edge,i},
\end{multline}
with:
\begin{multline}\label{sw:eq:def_R}
R^{n+1}_{\edge,i}=
\frac 1 2 \frac{|D_\edge|}{\delta t} \rho^{n+1}_{D_\edge} (u^{n+1}_{\edge,i} - u^n_{\edge,i})^2
- \frac 1 2 \sum_{\edged \in \edges(D_\edge)} F^n_{\edge, \edged}\ (u^n_{\edged,i} - u^n_{\edge,i})^2
\\
+ (u^{n+1}_{\edge,i} - u^n_{\edge,i}) \sum_{\edged \in \edges(D_\edge)} F^n_{\edge, \edged}\ (u^n_{\edged,i} - u^n_{\edge,i}).
\end{multline}
\end{lemma}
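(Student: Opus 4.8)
The plan is to test the discrete momentum balance \eqref{eq:scheme_mom}, for a fixed face $\edge$ and component $i$, against the new velocity $u^{n+1}_{\edge,i}$, and to reorganise each of the three resulting terms so that a ``per-term'' kinetic energy contribution of \eqref{eq:Ekin} appears, the excess being collected into the remainder $R^{n+1}_{\edge,i}$. The pressure term poses no difficulty: multiplication by $u^{n+1}_{\edge,i}$ reproduces verbatim the third term on the left-hand side of \eqref{eq:Ekin}. The whole argument therefore reduces to treating the discrete time-derivative and the convection term, and then glueing them together by means of the dual mass balance \eqref{eq:mass_diam}.

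For the time-derivative term I would use, after factoring out $|D_\edge|/\delta t$, the elementary identity valid for any reals,
\begin{equation*}
(\rho^{n+1}_{D_\edge} a - \rho^n_{D_\edge} b)\, a = \tfrac12 \bigl(\rho^{n+1}_{D_\edge} a^2 - \rho^n_{D_\edge} b^2\bigr) + \tfrac12\, \rho^{n+1}_{D_\edge}\, (a-b)^2 + (\rho^{n+1}_{D_\edge}-\rho^n_{D_\edge})\, b\,\bigl(a - \tfrac12 b\bigr),
\end{equation*}
with $a = u^{n+1}_{\edge,i}$ and $b = u^n_{\edge,i}$. The first term is half the discrete time-derivative of the kinetic energy appearing in \eqref{eq:Ekin}, the second is the leading, sign-definite contribution to $R^{n+1}_{\edge,i}$, and the last term carries the density increment $\rho^{n+1}_{D_\edge}-\rho^n_{D_\edge}$ that will have to be cancelled later.

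For the convection term I would split $u^{n+1}_{\edge,i} = u^n_{\edge,i} + (u^{n+1}_{\edge,i}-u^n_{\edge,i})$ and, on the part multiplied by $u^n_{\edge,i}$, use $c\,b = \tfrac12 c^2 + \tfrac12 b^2 - \tfrac12 (c-b)^2$ with $c = u^n_{\edged,i}$ inside the sum over the dual faces. This produces the genuine kinetic energy flux $\tfrac12 \sum_{\edged\in\edges(D_\edge)} F^n_{\edge,\edged}\, (u^n_{\edged,i})^2$ of \eqref{eq:Ekin}; the two remaining terms of $R^{n+1}_{\edge,i}$, namely $-\tfrac12 \sum_{\edged\in\edges(D_\edge)} F^n_{\edge,\edged}(u^n_{\edged,i}-u^n_{\edge,i})^2$ and $(u^{n+1}_{\edge,i}-u^n_{\edge,i})\sum_{\edged\in\edges(D_\edge)} F^n_{\edge,\edged}(u^n_{\edged,i}-u^n_{\edge,i})$; and a leftover term equal to $b\,(a-\tfrac12 b)\sum_{\edged\in\edges(D_\edge)} F^n_{\edge,\edged}$.

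The key step, and the only place where the structure of the scheme really enters, is the cancellation of the two leftover terms. The leftover from the time derivative is $\tfrac{|D_\edge|}{\delta t}(\rho^{n+1}_{D_\edge}-\rho^n_{D_\edge})\, b\,(a-\tfrac12 b)$, and the dual mass balance \eqref{eq:mass_diam} gives exactly $\tfrac{|D_\edge|}{\delta t}(\rho^{n+1}_{D_\edge}-\rho^n_{D_\edge}) = -\sum_{\edged\in\edges(D_\edge)} F^n_{\edge,\edged}$, so this contribution is $-b\,(a-\tfrac12 b)\sum_{\edged\in\edges(D_\edge)} F^n_{\edge,\edged}$, the exact opposite of the convection leftover. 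I expect the bookkeeping of this cancellation, keeping track of the common factor $b\,(a-\tfrac12 b)$ through the two different algebraic rearrangements, to be the main (though purely computational) obstacle; once it is carried out, collecting the surviving terms and the pressure contribution yields \eqref{eq:Ekin} with $R^{n+1}_{\edge,i}$ exactly as in \eqref{sw:eq:def_R}.
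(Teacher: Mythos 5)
Your proof is correct: multiplying the discrete momentum balance \eqref{eq:scheme_mom} by $u^{n+1}_{\edge,i}$, rearranging the time-derivative and convection terms by the elementary quadratic identities you state, and cancelling the two leftover terms via the dual mass balance \eqref{eq:mass_diam} is exactly the argument the paper relies on (it defers the computation to the cited reference), and your algebra reproduces $R^{n+1}_{\edge,i}$ of \eqref{sw:eq:def_R} exactly. Nothing is missing.
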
 

The corrective term in the internal energy balance compensates this remainder term:
\begin{equation}\label{eq:def_SK}
\forall K \in \mesh, \qquad S_K^{n+1} = \frac 1 2 \sum_{i=1}^d \sum_{\edge \in \edges(K)} R^{n+1}_{\edge,i}.
\end{equation}
At the first time step, this corrective term is simply set to zero:
\[
 S_K^0 = 0,\quad\forall K\in\mesh.
\]

With the upwind choice for the velocity at the dual face, the remainder term $R^{n+1}_{\edge,i}$ may be proven to be non-negative under a CFL condition.
In this case, $S_K^{n+1}$ is also non-negative, and with a suitable choice of the internal energy at the face (upwind or MUSCL choice with respect to the material velocity), the internal energy remains non-negative by construction of the scheme \cite{gas-18-mus}.
%
%
\section{Returning to the primal mesh}\label{sec:prim_conv}

For $K\in\mesh$, let us define a reconstructed convection operator ${\mathcal C}_K^n$ from the face convection operators ${\mathcal C}_\edge^n$ defined by \eqref{eq:conv_d_z} as follows. 
For a given scalar field $(z_\edge)_{\edge \in \edgesint}$, we set:
\begin{align}\label{eq:conv_p_z_ini}
{\mathcal C}_K^n z 
&= \frac 1 2 \sum_{\edge \in \edges(K)} {\mathcal C}_\edge^n z \nonumber
\\
&= \frac 1 {\delta t} \sum_{\edge \in \edges(K)} \frac {|D_\edge|} 2 (\rho_{D_\edge}^{n+1} z_\edge^{n+1}-\rho_{D_\edge}^n z_\edge^n)
+ \frac 1 2 \sum_{\edge \in \edges(K)} \sum_{\edged \in \edges(D_\edge)} F_{\edge,\edged}^n\ z_\edged^n.
\end{align}
We now define the following quantities (see Figure \ref{fig:conv}):
\begin{align}  \label{eq:defrhoz} &
|K|\ (\rho z)_K^n = \sum_{\edge \in \edges(K)} \frac {|D_\edge|} 2 \ \rho_{D_\edge}^n z_\edge^n,
\\[2ex] \label{eq_defG} &
G_{K,\edge}^n = -\frac 1 2 \sum_{\edged \in \edges(D_\edge), \edged \subset K} F_{\edge,\edged}^n\ z_\edged^n
+ \frac 1 2\sum_{\edged \in \edges(D_\edge), \edged \not \subset K} F_{\edge,\edged}^n\ z_\edged^n.
\end{align}
We easily check that the fluxes $G_{K,\edge}^n$ are conservative, in the sense that, for $\edge=K|L$, $G_{K,\edge}^n=-G_{L,\edge}^n$.
A simple reordering of the summations in \eqref{eq:conv_p_z_ini} using the conservativity of the mass fluxes through the dual edges yields:
\begin{equation}\label{eq:conv_p_z}
{\mathcal C}_K^n z = \frac{|K|}{\delta t}\ \bigl( (\rho z)_K^{n+1} - (\rho z)_K^n\bigr)
+ \sum_{\edge \in \edges(K)} G_{K,\edge}^n.
\end{equation}

\begin{figure}[t!]
\begin{center}
\scalebox{0.9}{
\begin{tikzpicture}[scale=1.5]
\fill[color=bclair!30!white,opacity=0.3] (3.,4.) .. controls (3.1,3.) .. (3.3,2.2) .. controls (4.,2.6) .. (5.,3.);
\fill[color=bclair!30!white,opacity=0.3] (3.,4.) .. controls (3.8,5.) .. (4.2,4.8) .. controls (4.6,4.7) .. (5.,3.);
\draw[very thick, color=black] (1.,2.) -- (4.,1.) -- (5.,3.) -- (3.,4.) -- (1.,2.);
\draw[very thick, color=black] (1.,2.) -- (4.,1.) node[midway,sloped,above]{$K$};
\draw[very thick, color=black] (5.,3.) -- (6.,5.) -- (3.,6.) -- (3.,4.);
\draw[very thick, color=black] (1.,2.) -- (0.,1.5);
\draw[very thick, color=black] (4.,1.) -- (4.5,0.);
\draw[very thick, color=black] (3.,4.) -- (1.5,4.2);
\draw[very thick, color=black] (5.,3.) -- (6.,3);
\draw[very thick, color=black] (7.,5.) -- (6.,5.);
\draw[very thick, color=black] (6.,5.) -- (6.,6.);
\draw[very thick, color=black] (3.,6.) -- (3.5,7.);
\draw[very thick, color=black] (3.,6.) -- (2.,6.);
\draw[thick, color=bfonce] (3.,4.) .. controls (3.1,3.) .. (3.3,2.2) node[near end,sloped,above]{$\edged_1$};
\draw[very thick, color=red!70!black, <-] (3.4,3.1) -- (2.6,2.8) node[midway,sloped,above]{$\ -F_{\edge,\edged_1}$};
\draw[thick, color=bfonce] (3.3,2.2) .. controls (4.,2.6) .. (5.,3.) node[near start,sloped,above]{$\edged_2$};
\draw[very thick, color=red!70!black, <-] (3.9,3.) -- (4.3,2.2) node[midway,sloped,above]{$-F_{\edge,\edged_2}$};
\draw[thick, color=bfonce] (4.2,4.8) .. controls (4.6,4.7) .. (5.,3.) node[midway,sloped,above]{$\edged_3$};
\draw[very thick, color=red!70!black, <-] (5.5,4.4) -- (4.5,3.9) node[midway,sloped,below]{$F_{\edge,\edged_3}$};
\draw[thick, color=bfonce] (3.,4.) .. controls (3.8,5.) .. (4.2,4.8)  node[midway,sloped,above]{$\edged_4$};
\draw[very thick, color=red!70!black, <-] (3.05,4.9) -- (3.75,4.3) node[midway,sloped,below]{$\,F_{\edge,\edged_4}$};
\draw[thick, color=bfonce] (1.,2.) .. controls (2.,2.3) .. (3.3,2.2);
\draw[thick, color=bfonce] (4.,1.) .. controls (3.8,2.) .. (3.3,2.2);
\draw[thick, color=bfonce] (6.,5.) -- (4.2,4.8);
\draw[thick, color=bfonce] (3.,6.) -- (4.2,4.8);
\draw[very thick, color=black] (5.,3.) -- (3.,4.);
\draw[very thick, color=black] (5.,3.) -- (3.,4.) node[midway,sloped,above]{$\edge$};
\draw (7,2.3) node{$G_{K,\edge}=\frac 1 2 (-F_{\edge,\edged_1}z_{\edged_1}-F_{\edge,\edged_2}z_{\edged_2}$};
\draw (7.5,2.) node{$+F_{\edge,\edged_3}z_{\edged_3}+F_{\edge,\edged_4}z_{\edged_4})$};
\end{tikzpicture}
}
\end{center}
\caption{Definition of the convection flux at a primal face.}
\label{fig:conv}
\end{figure}
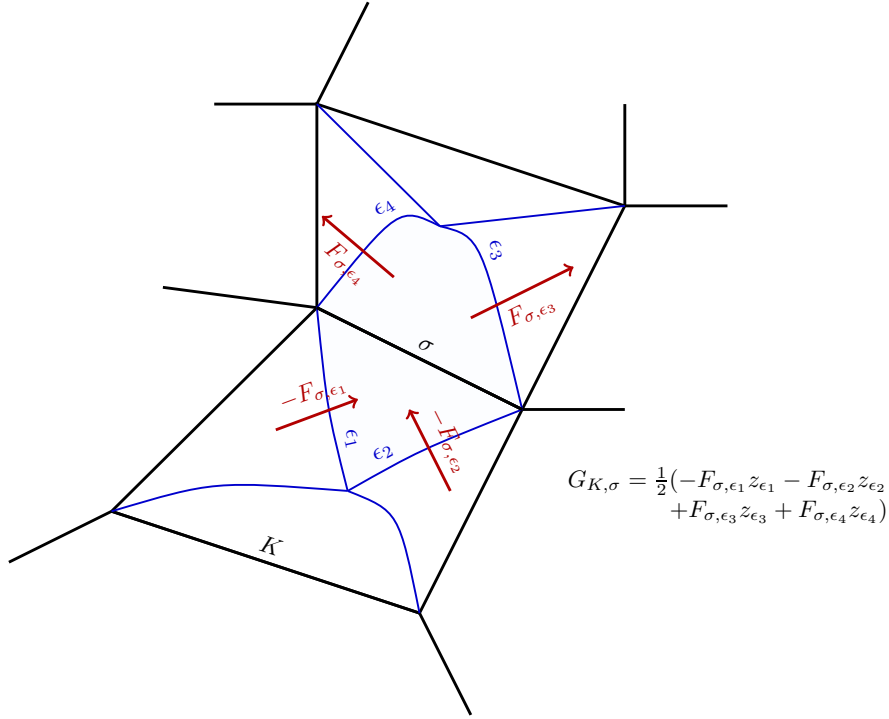
%
%
\section{Lax-Wendroff consistency of the primal mesh convection operator}\label{sec:lw}

The aim of the this section is to check the consistency, in the Lax-Wendroff sense, of the discrete operator ${\mathcal C}_K^n$.

\medskip
We suppose given a sequence of meshes $(\mesh\exm)_{m \in \xN}$ and time steps $(\delta t\exm)_{m \in \xN}$, with $h_{\mesh\exm}$ and $\delta t\exm$ tending to zero as $m$ tends to $+\infty$.
For $m \in \xN$, let us denote by $\rho\exm$, $\bfu\exm$ and $z\exm$ the discrete functions corresponding to the approximation on the mesh $\mesh\exm$ and with the time step $\delta t\exm$ of $\rho$, $\bfu$ and $z$ respectively, defined by:
\begin{align}
\label{eq:disc_rho} &
\rho\exm(\bfx,t)=\sum_{n=0}^{N\exm-1} \sum_{K\in\mesh\exm} \rho_K^n \ \mathcal X_K\ \mathcal X_{[t_n,t_{n+1})},
\\ 
\label{eq:disc_u} &
\bfu\exm(\bfx,t)=\sum_{n=0}^{N\exm-1} \sum_{\edge\in\edges\exm} \bfu_\edge^n \ \mathcal X_{D_\edge}\ \mathcal X_{[t_n,t_{n+1})},
\\ 
\label{eq:disc_z} &
z\exm(\bfx,t)=\sum_{n=0}^{N\exm-1} \sum_{\edge\in\edges\exm} z_\edge^n \ \mathcal X_{D_\edge}\ \mathcal X_{[t_n,t_{n+1})},
\end{align}
where $\mathcal X_K$, $\mathcal X_{D_\edge}$ and $\mathcal X_{[t_n,t_{n+1})}$ stand for the characteristic function of $K$, $D_\edge$ and the interval $[t_n,t_{n+1})$ respectively.

\medskip
We denote by $\theta_{\mesh,1}$ the following measure of the regularity of a mesh:
\begin{equation} \label{eq:theta1}
\theta_{\mesh,1} = \max \bigl\{\frac{|K|}{|L|}, \frac{|L|}{|K|} \mbox{ for } \edge=K|L \in \edgesint,  \bigr\},
\end{equation}
Note that, since $|D_{K,\edge}|=|K|/\zeta$ (recall that $\zeta$ is the number of faces of $K$), we have
\begin{equation}
|D_\edge|=\frac 1 \zeta \ (|K|+|L|) \leq \frac 1 \zeta\ (1+\theta_{\mesh,1})\ |K|.
\label{eq:majDedge}
\end{equation}

\medskip
Thanks to \eqref{eq:majDedge}, the proof of the following lemma is a straightforward consequence of \cite[Lemma 4.3]{gal-19-wea}.

\medskip
\begin{lemma}[Limit of the difference of unknowns translates]\label{lmm:trans}
Let $\theta >0$ and $(\mesh\exm)_{m \in \xN}$ be a sequence of meshes such that $\theta_{\mesh\exm,1} \le \theta$ for all $m \in \xN$ and $\lim_{m \to +\infty} h_{\mesh\exm} =0$.
We also assume that $\delta t\exm$ tends to zero.

Let $(\varrho\exm)_{m\in \xN}$ and $(z\exm)_{m\in \xN}$ be a sequence of discrete functions associated to the sequence of meshes and time steps, that are piecewise constant over respectively the primal cells and the dual cells.
Let us suppose that $(\varrho\exm)_{m\in \xN}$ and $(z\exm)_{m\in \xN}$ converge in $L^1(\Omega\times(0,T))$ to $\bar \varrho$ and $\bar z$ respectively.
Then
\begin{equation}\label{eq:trans_prim}
\lim_{m \to +\infty} \sum_{n=0}^{N\exm-1} \delta t\exm \sum_{\substack{\edge\in\edges\exm\\ \edge=K|L}} |D_\edge|\ |\varrho_K^n - \varrho_L^n| =0,
\end{equation}
and
\begin{equation}\label{eq:trans_dual}
\lim_{m \to +\infty} \sum_{n=0}^{N\exm-1} \delta t\exm \sum_{K \in\mesh\exm} |K|\sum_{\edge,\edgeprim \in \edges(K)} |z_\edge^n-z_{\edgeprim}^n| =0.
\end{equation}
\end{lemma}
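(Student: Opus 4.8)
The plan is to treat both limits as two instances of a single density argument. Each left-hand side is a mesh-dependent seminorm of the discrete function that would be controlled by a total variation were the latter bounded; but no such uniform (discrete BV) bound is assumed here, so convergence to zero must be extracted solely from the facts that $\varrho\exm$ and $z\exm$ converge to the \emph{fixed} limits $\bar\varrho$ and $\bar z$ and that $C^\infty_c(\Omega\times(0,T))$ is dense in $L^1$. The engine throughout is the elementary inequality $|a-b|\le|a-c|+|c-b|$ applied with $c$ the trace of a smooth intermediary $\varphi$, the point being that the jumps of $\varphi$ are $O(h_{\mesh})$ while the remaining terms are $L^1$-distances to $\bar\varrho$ or $\bar z$.

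For \eqref{eq:trans_prim}, fix $\varphi$ and set $\varphi_K^n=\varphi(\bfx_K,t_n)$ at the cell centre $\bfx_K$. Inserting $\varphi_K^n,\varphi_L^n$ into $|\varrho_K^n-\varrho_L^n|$ yields three contributions. The smooth one, $\sum_n\delta t\sum_{\edge=K|L}|D_\edge|\,|\varphi_K^n-\varphi_L^n|$, is bounded by $\|\nabla\varphi\|_\infty\,h_{\mesh}\sum_n\delta t\sum_\edge|D_\edge|=\|\nabla\varphi\|_\infty\,h_{\mesh}\,|\Omega|\,T\to0$. For the two rough contributions I would use \eqref{eq:majDedge} to pass from an $|D_\edge|$-weighted face sum to a $|K|$-weighted cell sum: since $K$ has at most $\zeta$ faces and $|D_\edge|\le\zeta^{-1}(1+\theta)\,|K|$, one obtains $\sum_{\edge\in\edges(K)}|D_\edge|\,|\varrho_K^n-\varphi_K^n|\le(1+\theta)\,|K|\,|\varrho_K^n-\varphi_K^n|$, so the rough part is at most $(1+\theta)\,\|\varrho\exm-\Pi_\mesh\varphi\|_{L^1(\Omega\times(0,T))}$, with $\Pi_\mesh\varphi$ the piecewise-constant primal interpolant. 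Bounding this in turn by $\|\varrho\exm-\bar\varrho\|_{L^1}+\|\bar\varrho-\varphi\|_{L^1}+\|\varphi-\Pi_\mesh\varphi\|_{L^1}$ and letting $m\to\infty$ (the first and third terms vanish by hypothesis and by smoothness) leaves $\limsup_m$ of the whole expression bounded by $(1+\theta)\,\|\bar\varrho-\varphi\|_{L^1}$; sending $\varphi\to\bar\varrho$ in $L^1$ then closes the argument.

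For \eqref{eq:trans_dual} the structure is identical but on the dual side. Choose $\varphi_\edge^n=\varphi(\bfx_\edge,t_n)$ at face centres and split $|z_\edge^n-z_{\edgeprim}^n|$ through $\varphi_\edge^n,\varphi_{\edgeprim}^n$. Because $\edge$ and $\edgeprim$ both lie on $\partial K$, we have $|\varphi_\edge^n-\varphi_{\edgeprim}^n|\le\|\nabla\varphi\|_\infty\,h_{\mesh}$, so the smooth part is $O(h_{\mesh})$ after summing the at most $\zeta^2$ pairs per cell against $|K|$. In each rough term the factor $|z_\edge^n-\varphi_\edge^n|$ is repeated $\zeta$ times (once per $\edgeprim$), and using $|K|=\zeta\,|D_{K,\edge}|$ from \eqref{hyp:DKsigma-zeta} together with the fact that summing $|D_{K,\edge}|$ over the one or two cells adjacent to $\edge$ gives $|D_\edge|$, I would rewrite the cell sum as $\zeta^2\sum_\edge|D_\edge|\,|z_\edge^n-\varphi_\edge^n|$, that is, a fixed multiple of $\|z\exm-\Pi_{\mesh}^{\rm dual}\varphi\|_{L^1}$ on the dual cells. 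The same triangle inequality and density conclusion then apply verbatim.

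The only genuine difficulty is the absence of a uniform bound of these seminorms by the $L^1$ norm, so the entire gain comes from the common limit passed through a smooth intermediary. Consequently the delicate points are purely organisational: arranging the face/cell double sums so that every discrete value is counted with the correct multiplicity, and invoking \eqref{eq:majDedge} and \eqref{hyp:DKsigma-zeta} so that all constants depend on the mesh only through the regularity bound $\theta$ and the fixed integer $\zeta$, hence stay uniform along the sequence. This is exactly the reduction announced before the statement, so that the result indeed follows from the general translate lemma \cite[Lemma 4.3]{gal-19-wea}.
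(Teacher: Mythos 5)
Your argument is correct, and it is in substance the same mechanism the paper relies on: the paper gives no self-contained proof here, but merely observes that, thanks to \eqref{eq:majDedge}, the statement is a straightforward consequence of the translates lemma \cite[Lemma 4.3]{gal-19-wea} --- which is exactly the density argument you reconstruct. The difference is therefore one of packaging: the paper's route is a one-line reduction to an external result, while yours unfolds that result and makes explicit where each hypothesis enters. In particular you correctly isolate the two ingredients: (i) the mesh-regularity bound $\theta_{\mesh\exm,1}\le\theta$ together with \eqref{hyp:DKsigma-zeta} serves only to convert $|D_\edge|$-weighted face sums into $|K|$-weighted cell sums (and back, via $|D_\edge|=\sum_{K}|D_{K,\edge}|$) with constants depending only on $\theta$ and $\zeta$; and (ii) once the sums are expressed as $L^1$ distances to a piecewise-constant interpolant of a smooth $\varphi$, the triangle inequality, the $O(h_{\mesh\exm})$ bound on the jumps of $\varphi$ (within a cell, respectively across a face), and the density of $C^\infty_c$ in $L^1$ finish the proof --- your diagnosis that no uniform discrete BV bound is available, so that all the gain must come from the two sequences sharing fixed $L^1$ limits, is the right one. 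The only quibble is bookkeeping in \eqref{eq:trans_dual}: summing over ordered pairs $(\edge,\edgeprim)$, each factor $|z_\edge^n-\varphi_\edge^n|$ occurs $\zeta$ times from the first rough term and $\zeta$ more times from the third (when $\edge$ plays the role of $\edgeprim$), so the constant is $2\zeta^2$ rather than $\zeta^2$; this is harmless since it is still uniform in $m$.
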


Let $\varphi \in C^\infty_c(\Omega\times[0,T))$ and let us define $\varphi_K^n$ by
\begin{equation}
\varphi_K^n = \varphi(\bfx_K,t_n), \mbox{ for } K \in \mesh\exm \mbox{ and } 0 \leq n \leq N\exm,
\label{eq:defphiKn}
\end{equation}
where $\bfx_K$ stands for an arbitrary point of $K$.
For $\edge \in \edgesint$, $\edge =K|L$, we denote by $d_\edge$ the distance $d_\edge=|\bfx_K-\bfx_L|$, and we introduce the following additional measure of the mesh regularity:
\begin{equation}\label{eq:theta2}
	\theta_{\mesh,2}=\max_{K\in \mesh} \Bigl\{\frac {\sum_{\edgeprim\in\edges(K)} |\edgeprim|} {|K|}\ \max_{\edge = K|L \in \edges(K)} \{h_K+h_L\} \Bigr\}.
\end{equation}
Note that for $\edge \in \edgesint$, $\edge=K|L$, we have
\begin{equation}\label{eq:theta2_cor}
|\edge|\ d_\edge \leq |\edge|\ (h_K+h_L) \leq \theta_{\mesh,2} \min(|K|,|L|) \leq \frac{2 \theta_{\mesh,2}} \zeta\ |D_\edge|.
\end{equation}
Also note that the parameter $\theta_{\mesh,2}$ defined in \eqref{eq:theta2} is bounded independently of the mesh size if there exists $C\in \xR_+$ such that $|K| \ge Ch$ and $|\edge |\le Ch^{d-1}$, for all $K \in \mesh$ and $\edge \in \edges$.

Let the piecewise vector function $\gradi_{\edges,\tdisc}\varphi$ be defined by:
\begin{multline} \label{eq:gradT}
\gradi_{\edges,\tdisc} \varphi = \sum_{n=0}^{N-1}\ \sum_{\sigma \in \edges} (\gradi_{\edges,\tdisc}\varphi)_\edge^n\ \mathcal{X}_{D_\sigma}(\bfx)\ \mathcal{X}_{[t_n,t_{n+1}[}(t), \mbox{ with }
\\
(\gradi_{\edges,\tdisc} \varphi)_\edge^n =
 \begin{cases} \displaystyle
\frac{|\edge|}{|D_\edge|}\ (\varphi_L^n-\varphi_K^n)\, \bfn_{K,\edge} & \mbox{ if }\edge\in\edgesint, \edge=K|L,
\\[2ex] \displaystyle
0 & \mbox{ if }\edge\in\edgesext.
\end{cases}
\end{multline}
This vector function may be seen as a non-consistent discrete gradient which seems to appear first in \cite{eym-00-hconv}, where its convergence properties are shown for specific meshes and norms. 
The following weak convergence result holds \cite[Theorem 3.2]{gal-19-wea}.

\medskip
\begin{lemma}\label{lmm:gradw}
Let $(\mesh\exm)_{m \in \xN}$ be a sequence of meshes such that the mesh step  $h_{\mesh\exm}$ tends to zero when $m$ tends to $+\infty$.
We suppose that there exists a real number $\theta$ such that $\theta_{\mesh\exm,2} \leq \theta$ for $m\in\xN$.
Let $(\delta t\exm)_{m\in\xN}$ be a sequence of time steps such that $\delta t\exm$ tends to zero when $m$ tends to $+\infty$.
Let $\varphi \in C_c^\infty(\Omega \times [0,T))$ and, for $m \in \xN$, $\gradi_{\edges\exm,\tdisc\exm}\, \varphi \in L^\infty(\Omega \times (0,T))^d$ be defined by \eqref{eq:gradT}.

Then the sequence $(\gradi_{\edges\exm,\tdisc\exm}\, \varphi)_{m \in \xN}$ is bounded in $L^\infty(\Omega \times (0,T))^d$ uniformly with respect to $m$ and converges to $\gradi \varphi$ in $L^\infty(\Omega \times (0,T))^d$ weak $\star$.
\end{lemma}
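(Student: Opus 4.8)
The plan is to prove the two assertions in turn: the uniform $L^\infty$ bound is elementary, while the weak-$\star$ convergence is obtained by testing against smooth functions and transferring the difference quotient onto the test function by a discrete integration by parts. The genuine difficulty is that \eqref{eq:gradT} is \emph{not} consistent cell by cell -- on a dual cell $D_\edge$ it reproduces only the normal component of $\gradi\varphi$ -- so the convergence has to come from cancellation between neighbouring cells.

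For the boundedness, I would simply observe that for $\edge=K|L\in\edgesint$ one has $|(\gradi_{\edges,\tdisc}\varphi)_\edge^n| = \frac{|\edge|}{|D_\edge|}\,|\varphi_L^n-\varphi_K^n| \le \frac{|\edge|\,d_\edge}{|D_\edge|}\,\|\gradi\varphi\|_\infty$ since $\varphi$ is Lipschitz, and then invoke the geometric estimate \eqref{eq:theta2_cor}, which gives $\frac{|\edge|\,d_\edge}{|D_\edge|}\le \frac{2\theta_{\mesh,2}}{\zeta}\le\theta$. Thus $\|\gradi_{\edges\exm,\tdisc\exm}\varphi\|_{L^\infty}\le\theta\,\|\gradi\varphi\|_\infty$ for every $m$, which is the claimed uniform bound. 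Because the sequence is then bounded in $L^\infty=(L^1)^\star$, a standard $3\varepsilon$ argument reduces the weak-$\star$ convergence to showing, for each $\bfpsi\in C^\infty_c(\Omega\times(0,T))^d$, that $\int_0^T\!\int_\Omega \gradi_{\edges\exm,\tdisc\exm}\varphi\cdot\bfpsi\,\dx\dt\to\int_0^T\!\int_\Omega\gradi\varphi\cdot\bfpsi\,\dx\dt$. Denoting by $\bfpsi_\edge^n$ the mean value of $\bfpsi$ over $D_\edge\times[t_n,t_{n+1})$ and inserting \eqref{eq:gradT}, the left-hand side equals $\sum_n \delta t\exm\sum_{\edge=K|L}|\edge|\,(\varphi_L^n-\varphi_K^n)\,\bfn_{K,\edge}\cdot\bfpsi_\edge^n$, the external faces being dropped since $\bfpsi$ is compactly supported and hence has vanishing mean over boundary dual cells for $m$ large.

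The crux is the passage from this quantity to a consistent one, and this is the step I expect to be the main obstacle. I would replace the dual-cell mean $\bfpsi_\edge^n$ by the face-and-slab mean $\tilde\bfpsi_\edge^n=\frac{1}{\delta t\exm|\edge|}\int_{t_n}^{t_{n+1}}\!\int_\edge\bfpsi\,\dedge\dt$. The essential observation is that the resulting error must be kept in face form: each internal face $\edge=K|L$ then carries the factor $(\varphi_L^n-\varphi_K^n)=O(h_K+h_L)$ -- precisely the cancellation between the two cells sharing $\edge$ -- multiplying $\bfn_{K,\edge}\cdot(\bfpsi_\edge^n-\tilde\bfpsi_\edge^n)=O(h_K+h_L)$, so the total error is controlled by $\sum_\edge |\edge|\,(h_K+h_L)^2$. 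This vanishes thanks to $\theta_{\mesh,2}\le\theta$, which by \eqref{eq:theta2} bounds $\sum_{\edge\in\edges(K)}|\edge|\,(h_K+h_L)\le\theta\,|K|$. Bounding instead the (genuinely $O(1)$) per-cell divergence error termwise would fail; the whole point is to expose the $(\varphi_L-\varphi_K)$ factor.

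Once $\bfpsi_\edge^n$ is replaced by the exact face mean $\tilde\bfpsi_\edge^n$, I would regroup the face sum into a cell sum using $\bfn_{K,\edge}=-\bfn_{L,\edge}$, obtaining $-\sum_n\delta t\exm\sum_{K}\varphi_K^n\sum_{\edge\in\edges(K)}|\edge|\,\bfn_{K,\edge}\cdot\tilde\bfpsi_\edge^n$, where the inner sum equals $\frac{1}{\delta t\exm}\int_{t_n}^{t_{n+1}}\!\int_K\dive\bfpsi\,\dx\dt$ \emph{exactly}, by the divergence theorem applied with face means. It then remains to pass to the limit in the Riemann-type sum $-\sum_n\sum_K\varphi_K^n\int_{t_n}^{t_{n+1}}\!\int_K\dive\bfpsi\,\dx\dt$: since $\varphi_K^n=\varphi(\bfx_K,t_n)$ differs from $\varphi$ by $O(h_{\mesh\exm}+\delta t\exm)$ on $K\times[t_n,t_{n+1})$ and $\dive\bfpsi\in L^1$, this converges to $-\int_0^T\!\int_\Omega\varphi\,\dive\bfpsi\,\dx\dt$, which equals $\int_0^T\!\int_\Omega\gradi\varphi\cdot\bfpsi\,\dx\dt$ by the continuous integration by parts (the boundary term vanishing because $\bfpsi$ is compactly supported). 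Combining the three limits yields the assertion.
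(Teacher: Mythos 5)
Your proof is correct. Note, however, that the paper itself does not prove Lemma \ref{lmm:gradw}: it is quoted as an external result, namely \cite[Theorem 3.2]{gal-19-wea}, so there is no in-paper argument to compare against line by line. Your self-contained derivation follows what is essentially the standard route for this kind of non-consistent discrete gradient: the uniform $L^\infty$ bound from $|\varphi_L^n-\varphi_K^n|\le \|\gradi\varphi\|_{L^\infty}\, d_\edge$ combined with \eqref{eq:theta2_cor} (this is exactly why the hypothesis $\theta_{\mesh\exm,2}\le\theta$ is needed, and your bound $\frac{2\theta_{\mesh,2}}{\zeta}\le\theta$ is fine since $\zeta\ge 2$); the reduction of weak-$\star$ convergence to smooth test functions by density of $C^\infty_c$ in $L^1$; and the identification of the limit by replacing the dual-cell/time-slab mean $\bfpsi_\edge^n$ by the face/time-slab mean $\tilde\bfpsi_\edge^n$, so that the cell-regrouped sum $\sum_{\edge\in\edges(K)}|\edge|\,\bfn_{K,\edge}\cdot\tilde\bfpsi_\edge^n$ is \emph{exactly} $\frac1{\delta t\exm}\int_{t_n}^{t_{n+1}}\int_K\dive\bfpsi$, after which only a Riemann-sum passage and the continuous integration by parts remain. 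Your diagnosis of the crux is also the right one: a termwise per-cell consistency estimate of the discrete divergence of $\bfpsi$ is only $O(1)$ (of order $\theta|K|$), and the convergence must come from keeping the $(\varphi_L^n-\varphi_K^n)=O(h_K+h_L)$ factor attached to each face so that the replacement error is controlled by $\sum_{\edge=K|L}|\edge|\,(h_K+h_L)^2$, which the bound $\sum_{\edge\in\edges(K)}|\edge|\,(h_K+h_L)\le\theta_{\mesh,2}|K|$ from \eqref{eq:theta2} makes $O(h_{\mesh\exm})$. The only cosmetic remark is that external faces need not be "dropped for $m$ large'': definition \eqref{eq:gradT} already sets the discrete gradient to zero there, so they never contribute.
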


\medskip
Multiplying \eqref{eq:defrhoz} by $\varphi_K^n$ defined by \eqref{eq:defphiKn}, summing over $K \in \mesh\exm$ and $n= 0, \ldots, N\exm-1$ yields 
\begin{equation}\label{eq:weak_conv}
\sum_{n=0}^{N\exm-1} \delta t \sum_{K\in\mesh\exm} {\mathcal C}_K^n z\ \varphi_K^n = T_{\partial t}\exm + T_{\rm div}\exm
\end{equation}
with
\begin{align}\label{eq:tint} &
T_{\partial t}\exm=\sum_{n=0}^{N\exm-1} \sum_{K\in\mesh\exm} |K|\ \bigl( (\rho z)_K^{n+1} - (\rho z)_K^n\bigr) \ \varphi_K^n,
\\ \label{eq:tdiv} &
T_{\rm div}\exm= \sum_{n=0}^{N\exm-1} \delta t\exm  \sum_{K\in\mesh\exm} \varphi_K^n \sum_{\edge \in \edges(K)} G_{K,\edge}^n.
\end{align}
We are now in position to state the following consistency results.

\medskip
\begin{lemma}[Consistency of the time derivative]\label{lmm:dt_term}
Let $(\mesh\exm)_{m \in \xN}$ and $(\delta t\exm)_{m\in\xN}$ be a sequence of meshes and time steps such that the space and time steps, $h_{\mesh\exm}$ and $\delta t\exm$, tend to zero when $m$ tends to $+\infty$.
Let $(\rho\exm)_{m\in \xN}$ and $(z\exm)_{m\in \xN}$ be a sequence of discrete functions associated to $(\mesh\exm)_{m \in \xN}$ and $(\delta t\exm)_{m\in\xN}$.
Let us suppose that $(\rho\exm)_{m\in \xN}$ and $(z\exm)_{m\in \xN}$ converge in $L^1(\Omega\times(0,T))$ to $\bar \rho$ and $\bar z$ respectively, and that both sequences are bounded in $L^\infty(\Omega\times(0,T))$.
Finally, let $\varphi \in C^\infty_c(\Omega \times [0,T)$ and let $T_{\partial t}\exm$ be defined by \eqref{eq:tint}.
Then
\[
\lim_{m \to +\infty} T_{\partial t}\exm = -\int_0^T \int_\Omega \bar \rho\, \bar z\ \partial_t \varphi \dx \dt - \int_\Omega \rho_0\,z_0\,\varphi(\bfx,0)\dx.
\]
\end{lemma}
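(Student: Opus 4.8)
The plan is to combine a discrete integration by parts in time (Abel summation) with a reorganisation of the primal sum \eqref{eq:tint} into a sum carried by the \emph{dual} faces, where the reconstructed quantity $(\rho z)_K^n$ of \eqref{eq:defrhoz} splits naturally into the face densities $\rho_{D_\edge}^n$ and the face values $z_\edge^n$. Throughout I use that $\varphi \in C^\infty_c(\Omega\times[0,T))$ is compactly supported: for $m$ large enough $\varphi_K^n=0$ whenever $K$ touches $\partial\Omega$ or whenever $t_n$ is close to $T$, and I discard such contributions repeatedly.

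\emph{First step (reorganisation).} Inserting \eqref{eq:defrhoz} into \eqref{eq:tint} and exchanging the order of summation, each internal face $\edge=K|L$ collects the two half-contributions weighted by $\varphi_K^n$ and $\varphi_L^n$, while external faces carry a single term that vanishes for $m$ large. Writing $\varphi_{D_\edge}^n=\tfrac12(\varphi_K^n+\varphi_L^n)$ for $\edge=K|L$, this yields, for $m$ large enough, the exact identity
\begin{equation*}
T_{\partial t}\exm = \sum_{n=0}^{N\exm-1} \sum_{\edge \in \edgesint} |D_\edge|\,\bigl(\rho_{D_\edge}^{n+1}\, z_\edge^{n+1} - \rho_{D_\edge}^n\, z_\edge^n\bigr)\, \varphi_{D_\edge}^n .
\end{equation*}
The whole expression now lives on the dual mesh, where $\rho_{D_\edge}^n$ and $z_\edge^n$ are the natural values of the dual reconstructions of $\rho$ and $z$. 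I then apply, face by face, the summation by parts $\sum_{n=0}^{N-1}(b_{n+1}-b_n)\varphi^n = b_N\varphi^{N-1}-b_0\varphi^0+\sum_{n=1}^{N-1}b_n(\varphi^{n-1}-\varphi^n)$ with $b_n=\rho_{D_\edge}^n z_\edge^n$; the end term at $n=N$ vanishes since $\varphi_{D_\edge}^{N-1}=0$ for $m$ large, leaving an initial term $-\sum_{\edge\in\edgesint}|D_\edge|\,\rho_{D_\edge}^0 z_\edge^0\,\varphi_{D_\edge}^0$ and a bulk term $\sum_{n=1}^{N-1}\sum_{\edge\in\edgesint}|D_\edge|\,\rho_{D_\edge}^n z_\edge^n\,(\varphi_{D_\edge}^{n-1}-\varphi_{D_\edge}^n)$.

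\emph{Second step (passage to the limit).} By smoothness of $\varphi$ and $|\bfx_K-\bfx_L|\le h_K+h_L$, I write $\varphi_{D_\edge}^{n-1}-\varphi_{D_\edge}^n = -\delta t\exm\,\partial_t\varphi(\bfx_\edge,t_n)+O\!\bigl(\delta t\exm(h_{\mesh\exm}+\delta t\exm)\bigr)$ for any $\bfx_\edge\in D_\edge$; weighted by $|D_\edge|$ and summed, the remainder is controlled by $\|\partial_{tt}\varphi\|_\infty$ and $\|\gradi\partial_t\varphi\|_\infty$ and tends to $0$. The leading part is the Riemann sum $-\int_0^T\int_\Omega \rho_D\exm\, z\exm\,\partial_t\varphi \dx\dt$, where $\rho_D\exm$ is the dual reconstruction of $\rho$. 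The key point is that $\rho_D\exm\to\bar\rho$ in $L^1$: on $D_{K,\edge}$ the difference $\rho_D\exm-\rho\exm$ is a convex-combination difference bounded by $|\rho_K^n-\rho_L^n|$ (see \eqref{eq:rhoface}), so $\|\rho_D\exm-\rho\exm\|_{L^1(\Omega\times(0,T))}$ is dominated by the left-hand side of \eqref{eq:trans_prim}, which tends to $0$ by Lemma \ref{lmm:trans}; since $\rho\exm\to\bar\rho$, also $\rho_D\exm\to\bar\rho$. The initial term is handled identically, the time-zero reconstructions converging to $\rho_0$ and $z_0$ by \eqref{eq:inicond}, which produces $-\int_\Omega \rho_0\, z_0\,\varphi(\bfx,0)\dx$.

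\emph{Main obstacle.} The delicate point is passing to the limit in the \emph{product} $\rho_D\exm z\exm$: as $\rho\exm$ and $z\exm$ converge only in $L^1$, one cannot multiply the limits directly. The remedy is to use the $L^\infty$ bounds to extract a.e.-convergent subsequences, so that $\rho_D\exm z\exm\to\bar\rho\,\bar z$ a.e.; being uniformly bounded on a set of finite measure, the convergence holds in $L^1$ by dominated convergence, and uniqueness of the limit upgrades it to the full sequence. Testing against the (uniformly convergent, bounded) piecewise-constant reconstruction of $\partial_t\varphi$ then closes the argument. Controlling the mismatch between the dual reconstruction $\rho_D\exm$ and the primal one is precisely where Lemma \ref{lmm:trans}, and the underlying mesh-regularity bound on $\theta_{\mesh,1}$ that it requires, enters the proof.
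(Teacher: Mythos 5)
Your proof is correct in its mechanics but follows a genuinely different route from the paper. You transport the whole computation to the dual mesh: after inserting \eqref{eq:defrhoz} into \eqref{eq:tint} and reordering, each internal face carries $|D_\edge|\bigl(\rho_{D_\edge}^{n+1}z_\edge^{n+1}-\rho_{D_\edge}^n z_\edge^n\bigr)\tfrac12(\varphi_K^n+\varphi_L^n)$ (this identity is exact once $\varphi$ kills the boundary cells), and you then need the \emph{strong} $L^1$ convergence of the dual density reconstruction $\rho_D\exm$ to $\bar\rho$, which you obtain by comparing it to $\rho\exm$ on each half-diamond and invoking \eqref{eq:trans_prim}. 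The paper instead stays on the primal mesh: it shows only the \emph{weak} $L^p$ convergence of the reconstructed product $(\rho z)\exm$ to $\bar\rho\,\bar z$, by writing $\int(\rho z)\exm\psi=\int\rho\exm z\exm\psi+R_1\exm$ and absorbing the mismatch $|D_{L,\edge}|\rho_L^n-|D_{K,\edge}|\rho_K^n$ into the increment $\widetilde\psi_L^n-\widetilde\psi_K^n$ of the smooth test function, which is $O(h_{\mesh\exm})$; the time term is then handled by pairing this weakly convergent sequence with the strongly ($L^\infty$) convergent discrete derivative $\eth_t\exm\varphi$. Your "main obstacle" paragraph (subsequences, a.e.\ convergence, dominated convergence) is sound but can be shortened: for uniformly bounded sequences, $\|f_mg_m-fg\|_{L^1}\le\|f_m\|_{L^\infty}\|g_m-g\|_{L^1}+\|g\|_{L^\infty}\|f_m-f\|_{L^1}$ settles the product directly.

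The one genuine defect is that your argument proves the lemma under a stronger hypothesis than stated. Lemma \ref{lmm:dt_term} assumes \emph{no} mesh regularity, and the paper's proof is deliberately arranged so that none is needed (this is the point of proving only weak convergence of $(\rho z)\exm$, as the parenthetical remark at the end of the paper's Step~1 makes explicit). Your route, by contrast, rests on Lemma \ref{lmm:trans} — both for $\|\rho_D\exm-\rho\exm\|_{L^1}\to 0$ in the bulk term and, implicitly, for the analogous control of the initial reconstructions — and Lemma \ref{lmm:trans} requires $\theta_{\mesh\exm,1}\le\theta$ uniformly in $m$ (the bound \eqref{eq:majDedge}, which compares $|D_\edge|$ to $|K|$, already uses it). You acknowledge this dependence in your last sentence, but it means the statement you actually establish carries an extra hypothesis absent from the lemma. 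In the context of the paper this is harmless, since every place the lemma is applied (Lemma \ref{lmm:div_term}, Theorem \ref{theo:mom}) does assume $\theta_{\mesh\exm,1}\le\theta$; but as a proof of the lemma as written, you should either add that assumption to your statement or replace the strong-convergence step by the paper's weak-convergence device, which shifts the smallness onto the increments of the test function instead of onto the density jumps.
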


\begin{proof}
Let $(\rho z)\exm$ be the function associated to $((\rho z)_K^n)_{K\in \mesh, 0\leq n <N})$ that is piecewise constant on the primal cells and time intervals.
Since in general, $|D_{K,\edge}| \neq \frac 1 2\, |D_\edge|$ (so that $|K| \neq \frac 1 2 \sum_{\edge\in\edges(K)} |D_\edge|$), the quantity $(\rho z)_K$ given by Equation \eqref{eq:defrhoz} is not a convex combination of the value of the density and $z$ at the primal faces, so that it is not clear that $(\rho z)\exm$ converges to $\bar \rho \bar z$ in $L^1(\Omega\times(0,T)$ as $m\to +\infty$.
However, since $\{D_\edge, \edge \in \edges\}$ is a partition of $\Omega$, we may expect that $(\rho z)\exm$ weakly converges to $\bar \rho \bar z$; we  prove this result in a first preliminary step and prove the consistency of the time derivative term in a second step.

\smallskip
{\bf Step 1. Weak convergence of $(\rho z)\exm$.} Let $\psi \in C^\infty_c(\Omega\times[0,T))$, and, for $K \in \mesh\exm$ and $0 \leq n \leq N\exm$, let us define $\widetilde \psi_K^n$ as the mean value of $\psi$ over $K \times (t_n,t_{n+1})$.
Let us write
\[
I\exm = \int_0^T\int_\Omega (\rho z)\exm \psi \dx \dt=
\sum_{n=0}^{N\exm-1} \delta t \sum_{K\in\mesh\exm} |K|\ (\rho z)_K^n \widetilde \psi_K^n.
\]
By the definitions \eqref{eq:defrhoz} and \eqref{eq:rhoface}, we get:
\[
I\exm=\sum_{n=0}^{N\exm-1} \delta t \sum_{K\in\mesh\exm} \sum_{\edge\in\edges(K)} \frac1 2
\ \bigl(|D_{K,\edge}|\ \rho_K^n + |D_{L,\edge}|\ \rho_L^n \bigr)\ z_\edge^n\ \widetilde \psi_K^n.
\]
We now decompose $I\exm=\mathcal{T}_1\exm + R_1\exm$ with
\[
\mathcal{T}_1\exm =\sum_{n=0}^{N\exm-1} \delta t \sum_{K\in\mesh\exm} \sum_{\edge\in\edges(K)}
\ |D_{K,\edge}|\ \rho_K^n\ z_\edge^n\ \widetilde\psi_K^n.
\]
Then
\[
\mathcal{T}_1\exm =\int_0^T \int_\Omega \rho\exm z\exm \psi \dx \dt.
\]
The sequences $(\rho\exm)_{m \in \xN}$ and $(z\exm)_{m \in \xN}$ converge to $\bar \rho$ and $\bar z$ in $L^p(\Omega\times(0,T))$ for $1\leq p <+\infty$ (since, by assumption, these sequences converge in $L^1(\Omega\times(0,T))$ and are uniformly bounded).
We thus have:
\[
\lim_{m \to +\infty}\mathcal{T}_1\exm =\int_0^T \int_\Omega \bar \rho\ \bar z\ \psi \dx \dt.
\]
The remainder term $R_2\exm$ reads
\[
R_1\exm = \sum_{n=0}^{N\exm-1} \delta t \sum_{K\in\mesh\exm} \sum_{\edge\in\edges(K)} \frac1 2
\ \bigl(|D_{L,\edge}|\ \rho_L^n - |D_{K,\edge}|\ \rho_K^n\bigr)\ z_\edge^n\ \widetilde \psi_K^n.
\]
For $m$ large enough, since the support in space of $\psi$ is compact in $\Omega$, $\psi$ vanishes in the cells having a face included in the boundary, and a reordering of the sums yields
\begin{align*}
R_1\exm &= \sum_{n=0}^{N\exm-1} \delta t \sum_{K\in\mesh\exm} \sum_{\substack{\edge \in \edges(K)\\\edge=K|L}} \frac 1 2 |D_{K,\edge}|\ \rho_K^n\ z_\edge^n\ (\widetilde \psi_L^n - \widetilde \psi_K^n), \\
	&\le   C_\psi\  T \ |\Omega|\ ||\rho\exm||_{L^\infty(\Omega\times(0,T))}\ ||z\exm||_{L^\infty(\Omega\times(0,T))}\ h_{\mesh\exm}
\end{align*}
where $C_\psi \in \xR_+$ depends only on $\psi$;
therefore, $R_1\exm$ tends to zero when $m$ tends to $+\infty$.
Finally, we observe that the sequence $(\rho z)_{m \in \xN}$ is bounded in $L^\infty(\Omega\times(0,T))$, so that, by density, we obtain the weak convergence of this sequence in $L^p(\Omega\times(0,T))$, for $1 \leq p < +\infty$ (note that weak convergence for a limited range of indexes $p$ would have been obtained under weaker assumptions on the unknowns, namely a control in $L^q$ with a given finite $q$, only under regularity assumptions for the mesh; uniform boundedness  is the only case where no such assumption is needed).
A similar proof shows that the function of space only $(\rho z)^0$ obtained from the initial conditions \eqref{eq:inicond} weakly converges to $\rho_0 z_0$ in $L^p(\Omega)$, for $1 \leq p < +\infty$.

\smallskip
{\bf Step 2. Weak consistency of the time term.}
Let us now turn to $T_{\partial t}\exm$.
A discrete integration by parts with respect to time yields $T_{\partial t}\exm=T_{\partial t,1}\exm+T_{\partial t,2}\exm$ with
\begin{align*} 
&T_{\partial t,1}\exm=-\sum_{n=0}^{N\exm-1} \delta t \sum_{K\in\mesh\exm} |K|\ (\rho z)_K^{n+1}\ \frac{\varphi_K^{n+1}-\varphi_K^n}{\delta t},
\\
&T_{\partial t,2}\exm=-\sum_{K\in\mesh\exm} |K| (\rho z)_K^0\ \varphi_K^0.
\end{align*}
Let us denote by $\eth_t\exm \varphi$ the following time discrete derivative of $\varphi$:
\[
\eth_t\exm \varphi = \sum_{n=0}^{N\exm-1} \sum_{K\in\mesh\exm} \frac{\varphi_K^{n+1}-\varphi_K^n}{\delta t}\ \mathcal X_K\ \mathcal X_{[t_n,t_{n+1})}.
\]
With this notation, we get:
\[
T_{\partial t,1}\exm=-\int_{\delta t}^T \int_\Omega (\rho z)\exm \eth_t\exm \varphi(.,t-\delta t)\dx \dt.
\]
Thanks to the regularity of $\varphi$, $\eth_t\exm \varphi$ tends to $\partial_t \varphi$ when $m$ tends to $+\infty$ in $L^\infty(\Omega\times(0,T))$, so that, thanks to the weak convergence of the sequence $((\rho z)\exm)_{m\in\xN}$,
\[
\lim_{m \to +\infty} T_{\partial t,1}\exm = -\int_0^T \int_\Omega \bar \rho\ \bar z\ \partial_t \varphi \dx \dt.
\]
Similarly, the sequence of piecewise functions of space $((\varphi\exm)^0)_{m\in\xN}$ converges to $\varphi(.,0)$  in $L^\infty(\Omega)$, so that
\[
\lim_{m \to +\infty} T_{\partial t,2}\exm = -\int_\Omega \rho_0\,z_0\ \varphi(\bfx,0) \dx,
\]
which concludes the proof.
\end{proof}
%
%

\medskip
We now turn to the second term of \eqref{eq:weak_conv}, namely the divergence term $T_{\rm div}\exm$.

\medskip
\begin{lemma}[Consistency of the divergence term] \label{lmm:div_term}
Let $\theta >0$  and $(\mesh\exm)_{m \in \xN}$ be a sequence of meshes such that $\max(\theta_{\mesh\exm,1},\theta_{\mesh\exm,2}) \le \theta$ for all $m \in \xN$ and $\lim_{m \to +\infty} h_{\mesh\exm} =0$.
Let $(\delta t\exm)_{m\in\xN}$ be a sequence of time steps such that $\delta t\exm$ tends to zero when $m$ tends to $+\infty$.
Let $(\rho\exm)_{m\in \xN}$, $(\bfu\exm)_{m\in \xN}$ and $(z\exm)_{m\in \xN}$ be a sequence of discrete functions associated to to $(\mesh\exm)_{m \in \xN}$ and $(\delta t\exm)_{m\in\xN}$.
Let us suppose that $(\rho\exm)_{m\in \xN}$ and $(z\exm)_{m\in \xN}$ converge in $L^1(\Omega\times(0,T))$ to $\bar \rho$ and $\bar z$ respectively, and that both sequences are bounded in $L^\infty(\Omega\times(0,T))$.
We assume in addition that $(\bfu\exm)_{m\in \xN}$ converges to $\bar \bfu$ in $L^1(\Omega\times(0,T))^d$, and is bounded in $L^\infty(\Omega\times(0,T))^d$.
Finally, let $\varphi \in C^\infty_c(\Omega \times [0,T)$ and let $T_{\rm div}\exm$ be defined by \eqref{eq:tdiv}.
Then

\[
\lim_{m \to +\infty} T_{\rm div}\exm = -\int_0^T \int_\Omega \bar \rho\, \bar z\, \bar\bfu \cdot \gradi \varphi \dx \dt.
\]
\end{lemma}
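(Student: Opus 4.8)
The plan is to reduce $T_{\rm div}\exm$ by a discrete integration by parts, to isolate inside the resulting face sum a consistent flux equal to the primal mass flux weighted by $z$, and then to show that the two families of remainders vanish.

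Since the fluxes $G_{K,\edge}^n$ are conservative and vanish on $\edgesext$, I would first rewrite \eqref{eq:tdiv} as
\[ T_{\rm div}\exm=\sum_{n=0}^{N\exm-1}\delta t\exm\sum_{\substack{\edge\in\edgesint\\\edge=K|L}}G_{K,\edge}^n\,(\varphi_K^n-\varphi_L^n). \]
In the definition \eqref{eq_defG} of $G_{K,\edge}^n$ I would then replace each dual value $z_\edged^n$ by the face value $z_\edge^n$; writing the resulting quantity as $\tilde G_{K,\edge}^n\,z_\edge^n$, the substitution generates a first remainder $T_3\exm$ gathering the gaps $z_\edged^n-z_\edge^n$. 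For the leading part, the half-diamond balance \eqref{eq:mass_hdiam} (in which the primal flux $F_{K,\edge}^n$ crosses the base $\edge$) together with the primal mass balance \eqref{eq:scheme_mass} yield the exact identity
\[ \tilde G_{K,\edge}^n=F_{K,\edge}^n+\frac{1}{2\zeta}\,(A_L^n-A_K^n),\qquad A_K^n:=\sum_{\edgeprim\in\edges(K)}F_{K,\edgeprim}^n=-\frac{|K|}{\delta t\exm}\,(\rho_K^{n+1}-\rho_K^n). \]
This decomposes $T_{\rm div}\exm=T_1\exm+T_2\exm+T_3\exm$, with $T_1\exm$ built on $F_{K,\edge}^n z_\edge^n$ and $T_2\exm$ on $\tfrac{1}{2\zeta}(A_L^n-A_K^n)z_\edge^n$.

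For the main term $T_1\exm$ I would use that the discrete gradient \eqref{eq:gradT} is colinear with $\bfn_{K,\edge}$, so that $(\varphi_K^n-\varphi_L^n)\,\bfn_{K,\edge}=-\frac{|D_\edge|}{|\edge|}(\gradi_{\edges,\tdisc}\varphi)_\edge^n$; combined with $F_{K,\edge}^n=|\edge|\,\rho_\edge^n\,\bfu_\edge^n\cdot\bfn_{K,\edge}$ this converts the normal-component product into a genuine scalar product,
\[ F_{K,\edge}^n z_\edge^n\,(\varphi_K^n-\varphi_L^n)=-|D_\edge|\,\rho_\edge^n\,z_\edge^n\,\bfu_\edge^n\cdot(\gradi_{\edges,\tdisc}\varphi)_\edge^n, \]
whence $T_1\exm=-\int_0^T\!\int_\Omega (\rho\,z\,\bfu)_\edges\exm\cdot\gradi_{\edges\exm,\tdisc\exm}\varphi\dx\dt$, the integrand being piecewise constant on the cells $D_\edge$. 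I would pass to the limit by pairing the weak-$\star$ convergence $\gradi_{\edges\exm,\tdisc\exm}\varphi\to\gradi\varphi$ of Lemma~\ref{lmm:gradw} with the strong $L^1$ convergence of $(\rho z\bfu)_\edges\exm$ to $\bar\rho\,\bar z\,\bar\bfu$; this convergence follows from the assumed convergences and $L^\infty$ bounds on $z\exm,\bfu\exm$, and from \eqref{eq:trans_prim} to replace the face density $\rho_\edge^n$ (a convex combination of $\rho_K^n,\rho_L^n$) by $\bar\rho$. Since a strongly $L^1$-convergent sequence paired with a bounded weak-$\star$ $L^\infty$ one converges, this gives $T_1\exm\to-\int_0^T\!\int_\Omega\bar\rho\,\bar z\,\bar\bfu\cdot\gradi\varphi\dx\dt$, which is exactly the announced limit.

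It then remains to prove that $T_2\exm$ and $T_3\exm$ tend to zero, which I expect to be the crux. Both have the form $\sum_n\delta t\exm\sum_{\edge=K|L}(\text{flux})\times(\text{gap of face values})\times(\varphi_K^n-\varphi_L^n)$, and the Lipschitz bound $|\varphi_K^n-\varphi_L^n|\le C_\varphi(h_K+h_L)$ supplies one power of the mesh size. For $T_3\exm$ the gap is $z_\edged^n-z_\edge^n$, which is proportional to $z_\edgeprim^n-z_\edge^n$; bounding $|F_{\edge,\edged}^n|$ via \eqref{eq:fluxd}, reorganizing by cells and using \eqref{eq:theta2_cor} so that $|\edgeprim|(h_K+h_L)\le\theta_{\mesh,2}|K|$, one reproduces up to a constant the dual translate sum \eqref{eq:trans_dual}, which vanishes by Lemma~\ref{lmm:trans}. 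The genuine obstacle is $T_2\exm=-\frac{1}{2\zeta}\sum_n\delta t\exm\sum_K A_K^n\,B_K^n$ with $B_K^n=\sum_{\edge=K|L}z_\edge^n(\varphi_K^n-\varphi_L^n)$, obtained by reordering the face sum by cells: here $A_L^n-A_K^n$ is a difference of discrete divergences and carries no smallness for merely $L^\infty$-bounded data. The key will be the geometric identity $\sum_{\edgeprim\in\edges(K)}|\edgeprim|\,\bfn_{K,\edgeprim}=0$, which allows to write
\[ A_K^n=\sum_{\edgeprim\in\edges(K)}|\edgeprim|\,\bigl(\rho_\edgeprim^n\bfu_\edgeprim^n-\rho_{\edgeprim_0}^n\bfu_{\edgeprim_0}^n\bigr)\cdot\bfn_{K,\edgeprim} \]
as a sum of variations of $\rho\bfu$ across the faces of $K$, for an arbitrary reference face $\edgeprim_0$ of $K$. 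With $|B_K^n|\le C_\varphi\,\|z\exm\|_{L^\infty}\sum_{\edge=K|L}(h_K+h_L)$ and the bound $|\edgeprim|(h_K+h_L)\le\theta_{\mesh,2}|K|$ of \eqref{eq:theta2_cor}, the product $|A_K^n|\,|B_K^n|$ is controlled, up to a constant depending on $\theta$, $\|z\exm\|_{L^\infty}$ and $\varphi$, by $|K|\sum_{\edgeprim,\edgeprim_0\in\edges(K)}|\rho_\edgeprim^n\bfu_\edgeprim^n-\rho_{\edgeprim_0}^n\bfu_{\edgeprim_0}^n|$, that is, by the dual translate sum \eqref{eq:trans_dual} applied to the bounded, $L^1$-convergent face field $\rho\bfu$; hence $T_2\exm\to0$ by Lemma~\ref{lmm:trans}. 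Collecting the three limits yields the claim.
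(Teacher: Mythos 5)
Your proposal is correct and follows essentially the same route as the paper: the same discrete integration by parts, the same splitting of $G_{K,\edge}^n$ via \eqref{eq:mass_hdiam} into the consistent flux $F_{K,\edge}^n z_\edge^n$ plus the two remainders, the same pairing of the strong $L^1$ limit of the products with the weak-$\star$ limit of the discrete gradient from Lemma~\ref{lmm:gradw}, and Lemma~\ref{lmm:trans} for all residuals. The only (immaterial) variation is in the ``difference of discrete divergences'' term, where you subtract a reference face value $\rho_{\edgeprim_0}\bfu_{\edgeprim_0}$ and apply the dual-translate estimate to the face field $\rho\bfu$, whereas the paper subtracts a cell convex combination $\rho_K^n\bfu_K^n$ and splits the product difference with the identity $2(ab-cd)=(a+c)(b-d)+(a-c)(b+d)$ so as to invoke the primal translate for $\rho$ and the dual translate for $\bfu$ separately.
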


\medskip
\begin{proof}
Since the support in space of $\varphi$ is compact in $\Omega$, for $m$ large enough, $\varphi_K^n$ vanishes for $0\leq n \leq N\exm-1$ for every cell $K$ having one of its face on the boundary.
For such an index $m$, reordering the summations, we get:
\begin{equation}\label{eq:tdiv2}
T_{\rm div}\exm = \sum_{n=0}^{N\exm-1} \delta t\exm  \sum_{\substack{\edge\in\edgesint\exm\\ \edge=K|L}}  G_{K,\edge}^n\ (\varphi_K^n-\varphi_L^n).
\end{equation}
Let $\edge \in \edgesint$, $\edge=K|L$.
By the property \eqref{eq:mass_hdiam} of the mass fluxes through the dual cells, we get
\[
\begin{array}{ll} \displaystyle
-\sum_{\substack{\edged \in \edges(D_\edge)\\ \edged \subset K}} F_{\edge,\edged}^n\ z_\edged^n
& \displaystyle
= -z_\edge^n \sum_{\substack{\edged \in \edges(D_\edge)\\ \edged \subset K}} F_{\edge,\edged}^n + R_{\edge,1}^n
\\  & \displaystyle
= z_\edge^n\ \Bigl( F_{K,\edge}^n - \frac 1 \zeta \sum_{\edgeprim\in \edges(K)} F_{K,\edgeprim}^n \Bigr) + R_{\edge,1}^n,
\end{array}
\]
with
\[
R_{\edge,1}^n = -\sum_{\substack{\edged \in \edges(D_\edge)\\ \edged \subset K}} F_{\edge,\edged}^n\ (z_\edged^n-z_\edge^n).
\]
Similarly:
\[
\sum_{\substack{\edged \in \edges(D_\edge)\\ \edged \not \subset K}} F_{\edge,\edged}^n\ z_\edged^n
=z_\edge^n \Bigl( F_{K,\edge}^n + \frac 1 \zeta \sum_{\edgeprim\in \edges(L)} F_{L,\edgeprim}^n \Bigr) + R_{\edge,2}^n,
\]
with
\[
R_{\edge,2}^n = \sum_{\substack{\edged \in \edges(D_\edge)\\ \edged \not \subset K}} F_{\edge,\edged}^n\ (z_\edged^n-z_\edge^n).
\]
Finally, we get
\begin{equation}\label{eq:G}
G_{K,\edge}^n= F_{K,\edge}^n\ z_\edge^n + \frac 1 2 R_{\edge,1}^n + \frac 1 2 R_{\edge,2}^n + R_{\edge,3}^n,
\end{equation}
with
\[
R_{\edge,3}^n 
= \frac 1 {2\zeta}\ z_\edge^n \Bigl(\sum_{\edgeprim\in \edges(L)} F_{L,\edgeprim}^n-\sum_{\edgeprim\in \edges(K)} F_{K,\edgeprim}^n\Bigr)
\]
Using the expression \eqref{eq:G} in \eqref{eq:tdiv2}, we get $T_{\rm div}\exm=T_0\exm+T_2\exm  +T_3\exm$, with
\[
\begin{array}{l} \displaystyle
T_0\exm =\sum_{n=0}^{N\exm-1} \delta t\exm  \sum_{\substack{\edge\in\edgesint\exm \\ \edge=K|L}} F_{K,\edge}^n\ z_\edge^n\ (\varphi_K^n-\varphi_L^n),
\\ \displaystyle
T_2\exm = \frac 1 2 \sum_{n=0}^{N\exm-1} \delta t\exm  \sum_{\substack{\edge\in\edgesint\exm\\\edge=K|L}} (R_{\edge,1}^n+R_{\edge,2}^n)\ (\varphi_K^n-\varphi_L^n),
\\ \displaystyle
T_3\exm = \sum_{n=0}^{N\exm-1} \delta t\exm  \sum_{\substack{\edge\in\edgesint\exm\\ \edge=K|L}} R_{\edge,3}^n\ (\varphi_K^n-\varphi_L^n).
\end{array}
\]
We begin with the term $T_0\exm$, which we decompose as $T_0\exm=T_1\exm + R_0\exm$ with:
\begin{align*}
T_1\exm & = -\sum_{n=0}^{N\exm-1} \delta t\exm  \sum_{\substack{\edge\in\edgesint\exm \\ \edge=K|L}}
 (|D_{K,\edge}|\ \rho_K^n + |D_{L,\edge}|\ \rho_L^n)\ z_\edge^n\  \bfu_\edge^n \cdot \frac{|\edge|}{|D_\edge|}(\varphi_L^n-\varphi_K^n)\,\bfn_{K,\edge}
\\
& = -\int_0^T \int_\Omega \rho\exm\, z\exm\, \bfu\exm \cdot \gradi_{\edges\exm,\tdisc\exm} \varphi \dx \dt;
\end{align*}
again using the fact that  the convergence in $L^1(\Omega\times(0,T))$ and the boundedness in $L^\infty(\Omega\times(0,T))$ implies the convergence in any $L^p(\Omega\times(0,T))$, $1\le p<+\infty$, we get by Lemma \ref{lmm:gradw} that
\[
\lim_{m \to +\infty} T_1\exm = -\int_0^T \int_\Omega \bar \rho\, \bar z\, \bar \bfu \cdot \gradi \varphi \dx \dt.
\]
The residual term $R_0\exm$ reads:
\begin{multline*}
R_0\exm=\sum_{n=0}^{N\exm-1} \delta t\exm  \sum_{\substack{\edge\in\edgesint\exm\\ \edge=K|L}} |\edge|
\ \Bigl[\frac{|D_{K,\edge}|}{|D_\edge|}\ (\rho_\edge^n-\rho_K^n) + \frac{|D_{L,\edge}|}{|D_\edge|}\ (\rho_\edge^n-\rho_L^n)\Bigr]
\\
\ z_\edge^n\  \bfu_\edge^n \cdot \bfn_{K,\edge}\ (\varphi_K^n-\varphi_L^n),
\end{multline*}
The regularity of $\varphi$ implies that, for $\edge \in \edges$, $\edge=K|L$ and $0 \leq n \leq N-1$, $|\varphi_K^n-\varphi_L^n| \leq C_\varphi d_\edge$.
Hence, since $\rho_\edge^n$ is a convex combination of $\rho_K^n$ and $\rho_L^n$, there exists $C >0$, depending on the $L^\infty$ bound of the unknowns and on $\varphi$, such that
\[
|R_0\exm| \leq C \sum_{n=0}^{N\exm-1} \delta t\exm  \sum_{\substack{\edge\in\edgesint\exm\\ \edge=K|L}} |\edge|\ d_\edge\ |\rho_K^n-\rho_L^n|,
\]
and thus $R_0\exm$ tends to zero when $m$ tends to $+\infty$ thanks to the assumed regularity of the sequence of meshes and Lemma~\ref{lmm:trans}.

\medskip
The term $T_2\exm$ reads:
\begin{multline*}
T_2\exm=\frac 1 2 \!\!\sum_{n=0}^{N\exm-1} \delta t\exm  \!\!\!\sum_{\substack{\edge\in\edges\exm\\ \edge=K|L}}
\Bigl[\!
\sum_{\substack{\edged \in \edges(D_\edge)\\ \edged \subset L}}\!\! F_{\edge,\edged}^n\ (z_\edged^n-z_\edge^n)
-\!\!\sum_{\substack{\edged \in \edges(D_\edge) \\ \edged \subset K}} \!\!F_{\edge,\edged}^n\ (z_\edged^n-z_\edge^n)
\Bigr]
\ (\varphi_K^n-\varphi_L^n).
\end{multline*}
For $\edged=\edge|\edgeprim$, $z_\edged$ is supposed to be a convex combination of $z_\edge$ and $z_{\edgeprim}$, so that $|z_\edged-z_\edge| \leq |z_{\edgeprim}-z_\edge|$.
We thus get, reordering the sums, with the same constant $C_\varphi$ as previously:
\[
|T_2\exm| \leq  C_\varphi \sum_{n=0}^{N\exm-1} \delta t\exm \sum_{K\in\mesh\exm} \sum_{\substack{\edged \subset K\\ \edged=\edge|\edgeprim}}
(d_\edge + d_{\edgeprim})\ |F_{\edge,\edged}^n|\ |z_\edge^n -z_{\edgeprim}^n|
\]
Using now the expressions \eqref{eq:fluxd} and \eqref{primal-mass-flux}, and introducing $\xi =\max_{\edgeprim \in \edges(K)} \xi_{\epsilon,\tau}$ (recall that the coefficients $\xi_{\epsilon,\tau}$ do not depend on the cell $K$), we get:
\begin{multline*}
|T_2\exm| \leq  C_\varphi\,\xi \!\!\sum_{n=0}^{N\exm-1}\!\! \!\delta t\exm \!\!\!\sum_{K\in\mesh\exm}\!\! |K| \sum_{\substack{\edged \subset K\\ \edged=\edge|\edgeprim}}
\frac 1 {|K|} (d_\edge + d_{\edgeprim})
\!\Bigl(\!\! \sum_{\omega \in \edges(K)} \!\!|\omega|\ \rho_{\omega}^n\ |\bfu_{\omega}^n| \Bigr)  |z_\edge^n -z_{\edgeprim}^n|.
\end{multline*}
Since the unknowns are assumed to be bounded, by the assumed regularity of the sequence of meshes (see \eqref{eq:theta2} for the definition of the regularity parameter $\theta_{\mesh,2}$), Lemma \ref{lmm:trans} thus implies that $T_2\exm$ tends to zero when $m$ tends to $+\infty$.

\medskip
Let us turn to the term $T_3\exm$; using the fact that, for any cell $K$, 
\[\sum_{\edge \in \edges(K)} |\edge|\ \bfn_{K,\edge}=0,\] 
and denoting by $\bfu^n_K$ a convex combination of the face velocities $(\bfu_\edge)_{\edge \in \edges(K)}$, we get:
\[
\begin{array}{rl} \displaystyle
T_3\exm 
&= \displaystyle \sum_{n=0}^{N\exm-1} \delta t\exm \!\!\! \sum_{\substack{\edge\in\edgesint\exm \\ \edge=K|L}}
\frac 1 {2\zeta}\ z_\edge^n \Bigl(\sum_{\edgeprim\in \edges(L)} F_{L,\edgeprim}^n-\sum_{\edgeprim\in \edges(K)} F_{K,\edgeprim}^n\Bigr)(\varphi_K^n-\varphi_L^n)
\\[3ex]  
&= \displaystyle \sum_{n=0}^{N\exm-1} \delta t\exm \!\!\!  \sum_{\substack{\edge\in\edgesint\exm\\ \edge=K|L}}
\frac 1 {2\zeta}\ z_\edge^n \Bigl(\sum_{\edgeprim\in \edges(L)} |\edgeprim|\ (\rho_{\edgeprim}^n \bfu_{\edgeprim}^n - \rho_L^n \bfu_L^n) \cdot \bfn_{L,\edgeprim} 
\\ 
&  \hfill
- \displaystyle \sum_{\edgeprim\in \edges(K)} |\edgeprim|\ (\rho_{\edgeprim}^n \bfu_{\edgeprim}^n - \rho_K^n \bfu_K^n)\cdot \bfn_{K,\edgeprim}\Bigr)(\varphi_K^n-\varphi_L^n).
\end{array}
\]
Using the regularity of $\varphi$ and the fact that the unknowns are assumed to be bounded, we get by a reordering of the summations that there exists $C_\varphi$ only depending on $\varphi$ such that
\[
|T_3\exm| \leq C_\varphi \sum_{n=0}^{N\exm-1} \delta t\exm  \sum_{\substack{\edge\in\edges\exm\\ \edge=K|L}}
|V_{\edge,K}|\ |\rho_\edge^n \bfu_\edge^n-\rho_K^n \bfu_K^n| + |V_{\edge,L}|\ |\rho_\edge^n \bfu_\edge^n-\rho_L^n \bfu_L^n|,
\]
where the quantity $|V_{\edge,K}|$ is homogeneous to a volume, reads
\[
|V_{\edge,K}| = |\edge| \sum_{\edgeprim \in \edges(K)} d_{\edgeprim}.
\]
and satisfies
\[
|V_{\edge,K}| < \sum_{\edge \in \edges(K)} |V_{\edge,K}|=\sum_{\edge,\,\edgeprim \in \edges(K)}|\edge|\ d_{\edgeprim}
= \sum_{\edgeprim \in \edges(K)} d_{\edgeprim} \sum_{\edge \in \edges(K)} |\edge| \leq \zeta \,\theta_{\mesh\exm,2}\ |K|.
\]
By the regularity assumption of the sequence of meshes (see the Definition \eqref{eq:theta1} of the regularity parameter $\theta_{\mesh,1}$), we also have $|V_{\edge,K}| < C |D_\edge|$ with $C$ independent of the mesh.
Developing the difference of products in this relation by the identity $2(ab-cd)=(a+c)(b-d)+(a-c)(b+d)$ for $(a,b,c,d)\in\xR^4$, invoking the assumed boundedness of the unknowns and Lemma \ref{lmm:trans} thus yields that $T_3\exm$ tends to zero when $m$ tends to $+\infty$.
\end{proof}
%
%
\section{Lax-Wendroff consistency of the scheme}\label{sec:lw_mom}

The consistency of the mass balance equation easily follows by the arguments developed in \cite{gal-19-wea}, and we focus here on the equations involving the velocity convection operator, namely the momentum and the total energy balance equations.

\subsection{The momentum balance equation}

We show in this section that the limit of a convergent sequence of discrete solutions satifies a weak form of the momentum balance equation.
To this purpose, we derive a discrete momentum balance equation posed over the primal cells, involving the reconstructed convection operator defined by \eqref{eq:conv_p_z}, and pass to the limit in the resulting relation.
The consistency of the convection operator is already treated in the previous section, and we essentially have to check the consistency of the obtained (primal cell) discrete pressure gradient to obtain the following result.

\medskip
\begin{theorem}[Consistency of the primal cell momentum balance equation] \label{theo:mom}
Let $\theta >0$  and $(\mesh\exm)_{m \in \xN}$ be a sequence of meshes such that $\max(\theta_{\mesh\exm,1},\theta_{\mesh\exm,2}) \le \theta$ for all $m \in \xN$ and $\lim_{m \to +\infty} h_{\mesh\exm} =0$.
We also assume that $\delta t\exm$ tends to zero.\\[1ex]
Let us suppose that $(\rho\exm)_{m\in \xN}$ and $(e\exm)_{m\in \xN}$ converge in $L^1(\Omega\times(0,T))$ to $\bar \rho$ and $\bar e$ respectively, and are bounded in $L^\infty(\Omega\times(0,T))$; let the sequence $(\bfu\exm)_{m\in \xN}$ converge in $L^1(\Omega\times(0,T))^d$ to $\bar \bfu$ and be bounded in $L^\infty(\Omega\times(0,T))^d$.\\[1ex]
Then the sequence $(p\exm)_{m\in \xN}$ defined by $p\exm = (\gamma-1)\,\rho\exm\,e\exm$ for $m \in \xN$ converges in $L^1(\Omega\times(0,T))$ to $\bar p = (\gamma-1)\,\bar \rho\,\bar e$  and the limits $\bar \rho$, $\bar \bfu$ and $\bar p$ satisfy the following weak form of the momentum balance equation which reads:
\begin{equation}\label{eq:mom_weak}
\int_0^T \int_\Omega \Bigl( \bar \rho \, \bar u_i\, \partial_t \varphi +  \bar \rho\, \bar u_i\, \bar \bfu \cdot \gradi \varphi + \bar p\, \partial_i \varphi \Bigr) \dx \dt
+ \int_\Omega \rho_0\ (\bfu_0)_i\ \varphi(\bfx,0)\dx=0,
\end{equation}
for any function $\varphi \in C^\infty_c(\Omega \times [0,T))$ and for $1 \leq i \leq d$.
\end{theorem}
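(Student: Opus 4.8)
The plan is to derive from the staggered momentum balance \eqref{eq:scheme_mom} an equivalent balance posed on the primal cells, to test it against the nodal values $\varphi_K^n$, and to pass to the limit term by term. The convergence of the pressure is immediate: since $\rho\exm\to\bar\rho$ and $e\exm\to\bar e$ in $L^1(\Omega\times(0,T))$ with both sequences bounded in $L^\infty$, the splitting $\rho\exm e\exm-\bar\rho\,\bar e=(\rho\exm-\bar\rho)\,e\exm+\bar\rho\,(e\exm-\bar e)$ shows that $p\exm=(\gamma-1)\rho\exm e\exm\to\bar p$ in $L^1$, and $(p\exm)$ is bounded in $L^\infty$. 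Next, summing \eqref{eq:scheme_mom} over $\edge\in\edges(K)$ with weight $\tfrac12$ and invoking the definition \eqref{eq:conv_p_z_ini}, the first two terms collapse to ${\mathcal C}_K^n u_{\cdot,i}$ (the operator \eqref{eq:conv_d_z}--\eqref{eq:conv_p_z} applied to $z_\edge^n=u_{\edge,i}^n$), so that for every $K$ and $i$ one obtains the primal balance ${\mathcal C}_K^n u_{\cdot,i}+P_{K,i}^{n+1}=0$, where $P_{K,i}^{n+1}=\tfrac12\sum_{\edge\in\edges(K)}|D_\edge|\,(\gradi p)^{n+1}_{\edge,i}$ is the reconstructed pressure gradient. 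Multiplying by $\varphi_K^n$, summing over $K$ and $n$ and using \eqref{eq:weak_conv}, this reads $T_{\partial t}\exm+T_{\rm div}\exm+T_p\exm=0$, with $T_p\exm=\sum_n\delta t\exm\sum_K\varphi_K^n\,P_{K,i}^{n+1}$. Throughout, the compact support of $\varphi$ lets me discard, for $m$ large, all cells touching $\partial\Omega$, so boundary faces never contribute.

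The convection contribution is already covered by the previous section. Applying Lemma \ref{lmm:dt_term} and Lemma \ref{lmm:div_term} with $z_\edge^n=u_{\edge,i}^n$ (which converges to $\bar u_i$ in $L^1$ and is bounded in $L^\infty$ by the hypotheses on $\bfu\exm$) yields $\lim_m T_{\partial t}\exm=-\int_0^T\int_\Omega\bar\rho\,\bar u_i\,\partial_t\varphi\dx\dt-\int_\Omega\rho_0\,(\bfu_0)_i\,\varphi(\cdot,0)\dx$ and $\lim_m T_{\rm div}\exm=-\int_0^T\int_\Omega\bar\rho\,\bar u_i\,\bar\bfu\cdot\gradi\varphi\dx\dt$.

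The new work, and what I expect to be the main obstacle, is the pressure term. As written, $P_{K,i}^{n+1}$ is a sum of \emph{face-intrinsic} quantities, so a naive reordering to faces produces the \emph{symmetric} combination $\varphi_K^n+\varphi_L^n$, which cannot be matched to any consistent discrete gradient and whose magnitude is uncontrolled as $h\to0$. The key structural observation is that $P_{K,i}^{n+1}$ is in fact a \emph{conservative two-point} gradient: using the definition of $(\gradi p)_\edge$ together with $\sum_{\edge\in\edges(K)}|\edge|\,\bfn_{K,\edge}=0$, one rewrites it as $P_{K,i}^{n+1}=\sum_{\edge=K|L\in\edges(K)}|\edge|\,p_\edge^{*,n+1}\,(\bfn_{K,\edge})_i$ with the face pressure $p_\edge^{*}=\tfrac12(p_K+p_L)$. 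Reordering to interior faces now produces the \emph{difference} $\varphi_K^n-\varphi_L^n$, and recognizing $|\edge|\,(\varphi_L^n-\varphi_K^n)\,\bfn_{K,\edge}=|D_\edge|\,(\gradi_{\edges,\tdisc}\varphi)_\edge^n$ from \eqref{eq:gradT} gives $T_p\exm=-\int_0^T\int_\Omega p^{*}\exm\,(\gradi_{\edges,\tdisc}\varphi)_i\dx\dt$, where $p^{*}\exm$ is the dual-cell function carrying the averages $p_\edge^{*}$. It then suffices to pass to the limit in a strong--weak product: $\gradi_{\edges,\tdisc}\varphi\to\gradi\varphi$ in $L^\infty$ weak-$\star$ by Lemma \ref{lmm:gradw}, while $p^{*}\exm\to\bar p$ strongly in $L^1$. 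This last convergence is the delicate point: comparing $p^{*}\exm$ with $p\exm$ on each dual cell gives $\|p^{*}\exm-p\exm\|_{L^1(\Omega\times(0,T))}=\tfrac12\sum_n\delta t\exm\sum_{\edge=K|L}|D_\edge|\,|p_K^{n}-p_L^{n}|$, which tends to $0$ by Lemma \ref{lmm:trans} applied to $\varrho\exm=p\exm$; hence $p^{*}\exm\to\bar p$ in $L^1$. Consequently $\lim_m T_p\exm=-\int_0^T\int_\Omega\bar p\,\partial_i\varphi\dx\dt$, the $\delta t$ time shift between the $(n+1)$ pressure and the $n$ test gradient being absorbed by this strong convergence.

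Collecting the three limits in $T_{\partial t}\exm+T_{\rm div}\exm+T_p\exm=0$ and changing sign gives exactly \eqref{eq:mom_weak}. In summary, the only genuinely new difficulty is the pressure gradient: recognizing the reconstructed gradient as a conservative flux so that the discrete integration by parts transfers the difference onto $\varphi$ and the non-consistent gradient $\gradi_{\edges,\tdisc}\varphi$ appears, and then establishing the strong $L^1$ convergence of the face-averaged pressure through the translation estimate of Lemma \ref{lmm:trans}.
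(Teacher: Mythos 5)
Your proposal is correct, and the treatment of the convection part (summing \eqref{eq:scheme_mom} over $\edge\in\edges(K)$ with weight $\tfrac12$, recognizing ${\mathcal C}_K^n u_i$, and invoking Lemmas \ref{lmm:dt_term} and \ref{lmm:div_term}) coincides with the paper's. Where you genuinely diverge is the pressure term. The paper also reorders to faces and obtains the symmetric combination $\tfrac12(\varphi_K^n+\varphi_L^n)$ paired with $p_L^{n+1}-p_K^{n+1}$, but it then reorders \emph{back to cells}, replaces $\tfrac12(\varphi_K^n+\varphi_L^n)$ by the exact face--time average $\varphi_\edge^n$ so that $\sum_{\edge}|\edge|\,\nkedgei\,\varphi_\edge^n$ reproduces $\frac1{\delta t}\int_{t_n}^{t_{n+1}}\int_K\partial_i\varphi$ exactly, and passes to the limit using the strong $L^1$ convergence of $p\exm$ against this exact cell average; the two error terms (the time shift $p^{n+1}_K-p^n_K$ and the face-average defect $\delta\varphi_\edge^n$, the latter weighted by $|\edge|(h_K+h_L)\le C|D_\edge|$ via \eqref{eq:theta2_cor}) are killed by the translate lemma. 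You instead use $\sum_{\edge\in\edges(K)}|\edge|\,\bfn_{K,\edge}=0$ to rewrite the cell gradient as a conservative flux with face pressure $\tfrac12(p_K+p_L)$, push the difference onto $\varphi$, and recognize the non-consistent gradient $\gradi_{\edges,\tdisc}\varphi$ of \eqref{eq:gradT}, concluding by the weak-$\star$/strong pairing of Lemma \ref{lmm:gradw} with the $L^1$ convergence of the face-averaged pressure (itself obtained from \eqref{eq:trans_prim} applied to $p\exm$). Your route is more uniform with the treatment of $T_1\exm$ in Lemma \ref{lmm:div_term} and reuses Lemma \ref{lmm:gradw} instead of exact quadrature of $\partial_i\varphi$; its price is that the strong convergence must be established for the \emph{face-averaged, time-shifted} pressure rather than for $p\exm$ itself. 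Both ingredients are fine, but you dispatch the time shift in half a sentence (``absorbed by the strong convergence''); to be complete you should state, as the paper does via \cite[Lemma 4.3]{gal-19-wea} for its term $T_{2,2}\exm$, that $\sum_n\delta t\exm\sum_K|K|\,|p_K^{n+1}-p_K^n|\to 0$ (or equivalently that the one-step time translate of an $L^1$-convergent sequence converges to the same limit). This is a minor omission, not a gap.
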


\medskip
\begin{proof}
Since the sequences $(\rho\exm)_{m\in \xN}$ and $(e\exm)_{m\in \xN}$ are uniformly bounded, their convergence in $L^1(\Omega\times(0,T))$ implies their convergence in $L^p(\Omega\times(0,T))$ for any finite $p >1$, which yields the convergence of $(p\exm)_{m\in \xN}$ to $\bar p$.
Let us now, for $m\in\xN$ and for $K \in \mesh\exm$, sum the discrete momentum balance \eqref{eq:scheme_mom} over the faces of $K$ and divide by 2.
We obtain, for $K\in\mesh\exm$, $0 \leq n < N\exm$ and $1 \leq i \leq d$, dropping some superscripts $\exm$ for short:
\begin{equation}\label{eq:momK}
{\mathcal C}_K^n u_i + \frac 1 2 {\substack{\edge \in \edges(K)\\\edge=K|L}}  |\edge|\ (p^{n+1}_L-p^{n+1}_K)\,\nkedgei = 0. 
\end{equation}
Let $\varphi \in C^\infty_c(\Omega \times [0,T)$ and $\varphi_K^n = \varphi(\bfx_K,t_n)$, for $K \in \mesh\exm$ and $0 \leq n \leq N\exm$, with $\bfx_K\in K$.
Multiplying \eqref{eq:momK} by $\delta t\exm \varphi_K^n$ and summing over the cells and time levels, we obtain
\[\begin{array}{l}\displaystyle
T_1\exm + T_2\exm=0, \mbox{ with }
\\[1ex] \qquad \displaystyle
T_1\exm=\sum_{n=0}^{N\exm-1} \delta t\exm \sum_{K\in\mesh\exm} {\mathcal C}_K^n u_i\ \varphi_K^n,
\\[3ex] \qquad \displaystyle
\ T_2\exm=\sum_{n=0}^{N\exm-1} \delta t\exm \sum_{K\in\mesh\exm} \frac 1 2 \ \varphi_K^n {\substack{\edge \in \edges(K)\\\edge=K|L}}  |\edge|\ (p^{n+1}_L-p^{n+1}_K)\,\nkedgei.
\end{array}\]
By lemmas \ref{lmm:dt_term} and \ref{lmm:div_term}, we get:
\[
\lim_{m \to +\infty} T_1\exm = - \int_0^T \int_\Omega \Bigl( \bar \rho \, \bar u_i\, \partial_t \varphi +  \bar \rho\, \bar  u_i\, \bar\bfu \cdot \gradi \varphi \Bigr) \dx \dt
- \int_\Omega \rho_0\ (u_0)_i\ \varphi(\bfx,0)\dx.
\]
Reordering the sums in $T_2\exm$ yields
\[\begin{array}{ll}
T_2\exm
& \displaystyle
= \sum_{n=0}^{N\exm-1} \delta t\exm \sum_{\substack{\edge \in \edgesint\exm\\ \edge=K|L}} |\edge|\ (p^{n+1}_L-p^{n+1}_K)\, \frac{\varphi_K^n + \varphi_L^n} 2\,\nkedgei
\\ & \displaystyle
= -\sum_{n=0}^{N\exm-1} \delta t\exm \sum_{K\in\mesh\exm} p^{n+1}_K {\substack{\edge \in \edges(K)\\ \edge=K|L}}  |\edge|\ \frac{\varphi_K^n + \varphi_L^n} 2\,\nkedgei.
\end{array}\]
For $\edge \in \edgesint\exm$ and $0\leq n < N\exm$, let
\[
\varphi_\edge^n=\frac 1 {\delta t\exm\ |\edge|} \int_{t_n}^{t_{n+1}} \int_\edge \varphi \dedge \dt,\quad
\delta\varphi_\edge^n = \frac{\varphi_K^n + \varphi_L^n} 2 - \varphi_\edge^n.
\]
By regularity of $\varphi$, there exists a real number $C_\varphi$ such that $|\delta\varphi_\edge^n| \leq C_\varphi\ (h_K + h_L)$.
With the notation
\[
(\partial_i \varphi)_K^n=\delta t\exm {\substack{\edge \in \edges(K)\\ \edge=K|L}}  |\edge|\ \,\nkedgei \varphi_\edge^n= \int_{t_n}^{t_{n+1}} \int_K \partial_i \varphi \dx \dt,
\]
we remark that
\begin{equation}
|(\partial_i \varphi)_K^n| \leq C_\varphi\ \delta t\exm\ |K|.
\label{eq:estim-diphi}
\end{equation}

We now split $T_2\exm$ as $T_2\exm=-T_{2,1}\exm-T_{2,2}\exm-T_{2,3}\exm$ with
\[\begin{array}{l} \displaystyle
T_{2,1}\exm = \sum_{n=0}^{N\exm-1} \delta t\exm \sum_{K\in\mesh\exm} p^n_K\ (\partial_i \varphi)_K^n,
\\[3ex] \displaystyle
T_{2,2}\exm= \sum_{n=0}^{N\exm-1} \delta t\exm \sum_{K\in\mesh\exm} (p^{n+1}_K-p_K^n)\ (\partial_i \varphi)_K^n,
\\[3ex] \displaystyle
T_{2,3}\exm=\sum_{n=0}^{N\exm-1} \delta t\exm \sum_{K\in\mesh\exm} p^{n+1}_K {\substack{\edge \in \edges(K)\\\edge=K|L}}  |\edge|\ \delta\varphi_\edge^n \nkedgei.
\end{array}\]
The first term reads
\[
T_{2,1}\exm = \int_0^T \int_\Omega p\exm\, \partial_i\varphi \dx \dt,
\]
so that, since $p\exm \to \bar p$ in in $L^1(\Omega\times(0,T))$ as $m \to +\infty$,
\[
\lim_{m \to +\infty} T_{2,1}\exm = \int_0^T \int_\Omega \bar p\, \partial_i\varphi \dx \dt.
\]
Thanks to \eqref{eq:estim-diphi}, we get for $T_{2,2}\exm$:
\[
|T_{2,2}\exm| \leq C_\varphi\ \sum_{n=0}^{N\exm-1} \delta t\exm \sum_{K\in\mesh\exm} |K|\ |p^{n+1}_K-p_K^n|,
\]
so $T_{2,2}\exm$ tends to zero when $m$ tends to $+\infty$ by \cite[Lemma 4.3]{gal-19-wea}.
Reordering the sums in $T_{2,3}\exm$, we get
\[
T_{2,3}\exm=\sum_{n=0}^{N\exm-1} \delta t\exm \sum_{\substack{\edge \in \edgesint\exm\\ \edge=K|L}} |\edge|\ \delta\varphi_\edge^n\ (p^{n+1}_K -p^{n+1}_L)\nkedgei.
\]
Thanks to the estimate of $|\delta\varphi_\edge^n|$ and the regularity assumption on the sequence of meshes which implies that $|\edge|\ (h_K +h_L) \leq C\ |D_\edge|$ (see Inequality \eqref{eq:theta2_cor}), we obtain:
\[
|T_{2,3}\exm| \leq C_\varphi \sum_{n=0}^{N\exm-1} \delta t\exm \sum_{\substack{\edge \in \edgesint\exm\\ \edge=K|L}} |D_\edge|\ |p^{n+1}_K -p^{n+1}_L|,
\]
and $T_{2,3}\exm$ tends to zero when $m$ tends to $+\infty$ once again by \cite[Lemma 4.3]{gal-19-wea}, which concludes the proof.
\end{proof}
%
%
\subsection{The energy balance equation}\label{sec:lw_Etot}
We now derive a kinetic energy balance equation associated to the primal mesh and add to the internal energy balance equation to obtain a discrete total energy balance, which shows that the scheme is, in some sense, locally conservative with respect to the total energy.

\medskip
\begin{lemma}[Local discrete total energy balance]
A solution to the scheme \eqref{eq:scheme} satisfies the following total energy balance equation, for $0 \leq n \leq N-1$ and $K \in \mesh$: 
\begin{multline}\label{eq:etot}
\frac{|K|}{\delta t}\ \Bigl[\bigl(\rho_K^{n+2} e_K^{n+2} + (\rho\, \Ekin)_K^{n+1} \bigr) - \bigl(\rho_K^{n+1} e_K^{n+1}+ (\rho\, \Ekin)^n \bigr) \Bigr]
\\
+ {\substack{\edge \in \edges(K)\\\edge=K|L}}  F_{K,\edge}^{n+1}\, e_\edge^{n+1} + (\Gkin)_\edge^n
+ {\substack{\edge \in \edges(K)\\\edge=K|L}}  |\edge|\ \frac {p_K^{n+1} + p_L^{n+1}} 2\ \bfu_\edge^{n+1} \cdot \bfn_{K,\edge}
=0,
\end{multline}
where the kinetic energy $\Ekin$ and its associated flux $\Gkin$ result from the construction of the convection operator on the primal mesh from the convection on the dual mesh described in Section \ref{sec:prim_conv}:
\begin{equation} \begin{array}{l} \displaystyle
|K|\ (\rho\, \Ekin)_K^n= \frac 1 4 \sum_{\edge \in \edges(K)} |D_\edge|\ \rho_{D_\edge}^n\,|\bfu_\edge^n|^2,
\\[3ex] \displaystyle
(\Gkin)_\edge^n= -\frac 1 2 \sum_{\substack{\edged \in \edges(D_\edge)\\ \edged \subset K}} F_{\edge,\edged}^n\ |\bfu_\edged^n|^2
+ \frac 1 2\sum_{\substack{\edged \in \edges(D_\edge)\\ \edged \not \subset K}} F_{\edge,\edged}^n\ |\bfu_\edged^n|^2.
\end{array} \end{equation}
\end{lemma}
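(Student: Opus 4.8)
The plan is to obtain \eqref{eq:etot} by adding two balances: the internal energy balance \eqref{eq:scheme_Eint} written one time level later (i.e.\ at the step $n+1 \to n+2$, so that its right-hand side is $S_K^{n+1}$), and a kinetic energy balance reconstructed on the primal cell $K$ at the step $n \to n+1$. I would then check that the corrective term $S_K^{n+1}$ exactly annihilates the kinetic energy remainders, and that the two pressure-work contributions recombine into the single symmetric pressure flux appearing in \eqref{eq:etot}.

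First I would produce the primal kinetic energy balance. Summing the per-face, per-component identity \eqref{eq:Ekin} over $i=1,\dots,d$ replaces the squared components by $|\bfu_\edge|^2$, and the first two terms become exactly ${\mathcal C}_\edge^n z$ evaluated at $z=\Ekin$, with $\Ekin_\edge=\frac 1 2 |\bfu_\edge|^2$; the identity thus reads ${\mathcal C}_\edge^n \Ekin + |D_\edge|\,(\gradi p)_\edge^{n+1}\cdot \bfu_\edge^{n+1} = -R_\edge^{n+1}$, with $R_\edge^{n+1}=\sum_{i=1}^d R_{\edge,i}^{n+1}$. Multiplying by $\frac 1 2$ and summing over $\edge\in\edges(K)$ turns the convective part into ${\mathcal C}_K^n \Ekin$ by the very definition \eqref{eq:conv_p_z_ini} of the reconstructed operator; the conservative rewriting \eqref{eq:conv_p_z} then yields a balance whose accumulation term is $\frac{|K|}{\delta t}\bigl((\rho\Ekin)_K^{n+1}-(\rho\Ekin)_K^n\bigr)$ and whose flux is the reconstructed quantity $G_{K,\edge}^n$ of \eqref{eq_defG} associated with $z=\Ekin$, which is, up to the $\frac 1 2$ normalisation, the flux $(\Gkin)_\edge^n$. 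Its right-hand side is $-\frac 1 2 \sum_{\edge\in\edges(K)} R_\edge^{n+1} = -\frac 1 2\sum_{i=1}^d\sum_{\edge\in\edges(K)} R_{\edge,i}^{n+1}$.

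The crucial cancellation is then immediate: by the definition \eqref{eq:def_SK}, $S_K^{n+1}=\frac 1 2 \sum_{i=1}^d\sum_{\edge\in\edges(K)} R_{\edge,i}^{n+1}$ is exactly the opposite of this right-hand side. Hence, adding the shifted internal energy balance to the primal kinetic energy balance eliminates simultaneously $S_K^{n+1}$ and all the remainders, leaving only accumulation, convective flux and pressure-work terms. The two accumulation terms assemble into the bracket of \eqref{eq:etot}, with the expected time staggering (internal energy from $n+1$ to $n+2$, kinetic energy from $n$ to $n+1$), and the two convective fluxes combine into $\sum_{\edge\in\edges(K)}\bigl(F_{K,\edge}^{n+1} e_\edge^{n+1}+(\Gkin)_\edge^n\bigr)$.

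It then remains to recombine the pressure terms, which I expect to be the only genuine bookkeeping step. The internal energy balance contributes $|K|\,p_K^{n+1}(\dive\bfu)_K^{n+1}=p_K^{n+1}\sum_{\edge\in\edges(K)}|\edge|\,\bfu_\edge^{n+1}\cdot\bfn_{K,\edge}$, while the kinetic energy balance contributes $\frac 1 2\sum_{\edge\in\edges(K)}|D_\edge|\,(\gradi p)_\edge^{n+1}\cdot\bfu_\edge^{n+1}$. Using $|D_\edge|\,(\gradi p)_\edge^{n+1}=|\edge|\,(p_L^{n+1}-p_K^{n+1})\,\bfn_{K,\edge}$ for $\edge=K|L$, the latter equals $\frac 1 2\sum_{\edge}|\edge|\,(p_L^{n+1}-p_K^{n+1})\,\bfu_\edge^{n+1}\cdot\bfn_{K,\edge}$, and adding the two collapses the coefficient of $|\edge|\,\bfu_\edge^{n+1}\cdot\bfn_{K,\edge}$ to $p_K^{n+1}+\frac 1 2(p_L^{n+1}-p_K^{n+1})=\frac{p_K^{n+1}+p_L^{n+1}}{2}$, i.e.\ the symmetric pressure flux of \eqref{eq:etot}. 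The delicate points throughout are the consistent tracking of the factor $\frac 1 2$ inherited from ${\mathcal C}_K^n=\frac 1 2\sum_{\edge}{\mathcal C}_\edge^n$ (and its matching against the normalisations built into \eqref{eq:def_SK} and into the definitions of $(\rho\Ekin)_K^n$ and $(\Gkin)_\edge^n$), together with the treatment of boundary faces, where the vanishing mass flux and the condition $\bfu\cdot\bfn=0$ ensure that no spurious boundary contribution survives.
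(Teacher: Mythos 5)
Your proof is correct and follows exactly the paper's argument: sum the per-face, per-component kinetic energy balance \eqref{eq:Ekin} over $i$ and $\edge\in\edges(K)$, divide by two, add the internal energy balance \eqref{eq:scheme_Eint} shifted to the step $n+1\to n+2$, and observe that $S_K^{n+1}$ cancels the kinetic remainders by \eqref{eq:def_SK} while the two pressure contributions recombine into the symmetric face flux. The paper's own proof is only a two-line sketch of precisely this; your explicit verification of the pressure recombination and of the $\tfrac 1 2$ bookkeeping is exactly the computation it leaves implicit.
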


\begin{proof}
For $K \in \mesh$, summing over $\edge \in \edges(K)$ and $i=1,\ldots,d$ the kinetic energy balance \eqref{eq:Ekin} divided by two and adding the internal energy balance \eqref{eq:scheme_Eint} of the scheme written at the time step $n+2$ yields Equation \eqref{eq:etot}
Thanks to its \textit{ad hoc} definition \eqref{eq:def_SK}, the corrective term $S_K^{n+1}$ exactly compensates with the sum of the residual terms $R_\edge^{n+1}$ defined by \eqref{sw:eq:def_R} to yield a conservative equation.
\end{proof}

\medskip
We are now in position to prove the following weak consistency result by passing to the limit in the scheme.

\medskip
\begin{theorem}[Consistency of the total energy balance equation]
Let $\theta >0$  and $(\mesh\exm)_{m \in \xN}$ be a sequence of meshes such that $\max(\theta_{\mesh\exm,1},\theta_{\mesh\exm,2}) \le \theta$ for all $m \in \xN$ and $\lim_{m \to +\infty} h_{\mesh\exm} =0$.
We also assume that $\delta t\exm$ tends to zero.\\[1ex]
Let us suppose that $(\rho\exm)_{m\in \xN}$ and $(e\exm)_{m\in \xN}$ converge in $L^1(\Omega\times(0,T))$ to $\bar \rho$ and $\bar e$ respectively, and are bounded in $L^\infty(\Omega\times(0,T))$; let the sequence $(\bfu\exm)_{m\in \xN}$ converge in $L^1(\Omega\times(0,T))^d$ to $\bar \bfu$ and be bounded in $L^\infty(\Omega\times(0,T))^d$.
We suppose in addition that $\rho_0$ and $e_0$ belong to $W^{1,1}(\Omega)$ and $\bfu_0$ belongs to $W^{1,1}(\Omega)^d$.\\[1ex]
Then the sequence $(p\exm)_{m\in \xN}$ defined by $p\exm = (\gamma-1)\,\rho\exm\,e\exm$ for $m \in \xN$ converges in $L^1(\Omega\times(0,T))$ to $\bar p = (\gamma-1)\,\bar \rho\,\bar e$  and the limits $\bar \rho$, $\bar \bfu$, $\bar p$ and $\bar e$ satisfy the following weak form of the total energy balance equation:
\begin{multline}\label{eq:Etot_weak}
\int_0^T \int_\Omega \Bigl( \bar \rho \bar E\, \partial_t \varphi +  (\bar \rho\, \bar E+\bar p)\, \bar \bfu \cdot \gradi \varphi \Bigr) \dx \dt
+ \int_\Omega \rho_0 \,(E)_0\ \varphi(\bfx,0)\dx=0,
\\
\mbox{with } \bar E = \frac 1 2 \, |\bar \bfu|^2 + \bar e,\ (E)_0 = \frac 1 2 \, |\bfu_0|^2 + e_0
\end{multline}
and for any function $\varphi \in C^\infty_c(\Omega \times [0,T))$.
\end{theorem}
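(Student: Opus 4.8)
The plan is to test the local discrete total energy balance \eqref{eq:etot} against the discrete test function and pass to the limit, following the pattern of the proof of Theorem~\ref{theo:mom}; the genuinely new ingredients are the pressure--work term and the treatment of the initial data. First I would multiply \eqref{eq:etot} by $\delta t\exm\,\varphi_K^n$, with $\varphi_K^n=\varphi(\bfx_K,t_n)$, and sum over $K\in\mesh\exm$ and over the time indices for which \eqref{eq:etot} is meaningful. Since $\varphi$ has compact support in $\Omega\times[0,T)$, for $m$ large the boundary faces and the indices $n$ close to $N\exm$ do not contribute, so only interior faces $\edge=K|L$ survive after reordering. The resulting identity decomposes into five pieces: two unsteady terms, built respectively on $\rho_K e_K$ and on $(\rho\Ekin)_K$; two convective terms, the primal internal--energy flux $\sum_\edge F_{K,\edge}^{n+1}e_\edge^{n+1}$ and the reconstructed kinetic--energy flux $\sum_\edge(\Gkin)_\edge^n$; and the conservative pressure--work term $\sum_\edge|\edge|\,\tfrac12(p_K^{n+1}+p_L^{n+1})\,\bfu_\edge^{n+1}\cdot\bfn_{K,\edge}$.

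\textbf{Unsteady and kinetic convection.} The structural observation is that the unsteady term carrying $(\rho\Ekin)_K$ together with the flux $\sum_\edge(\Gkin)_\edge$ is, by the very construction of Section~\ref{sec:prim_conv}, the reconstructed convection operator $\mathcal{C}_K^n z$ of \eqref{eq:conv_p_z} evaluated at the face variable $z_\edge^n=\tfrac12|\bfu_\edge^n|^2$: indeed $(\rho\Ekin)_K^n$ is \eqref{eq:defrhoz} and $(\Gkin)_\edge^n$ is \eqref{eq_defG} for that choice of $z$. Since $(\bfu\exm)$ converges to $\bar\bfu$ in $L^1$ and is bounded in $L^\infty$, the sequence $z\exm=\tfrac12|\bfu\exm|^2$ converges to $\tfrac12|\bar\bfu|^2$ in $L^1(\Omega\times(0,T))$ and is bounded in $L^\infty$; Lemmas~\ref{lmm:dt_term} and \ref{lmm:div_term} then apply verbatim and produce the limits $-\int\bar\rho\,\tfrac12|\bar\bfu|^2\,\partial_t\varphi$, $-\int\rho_0\,\tfrac12|\bfu_0|^2\,\varphi(\cdot,0)$ and $-\int\bar\rho\,\tfrac12|\bar\bfu|^2\,\bar\bfu\cdot\gradi\varphi$. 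The unsteady internal--energy term is simpler, $\rho_K e_K$ being a genuine cell quantity with $\rho\exm e\exm\to\bar\rho\bar e$ in $L^1$ (up to the time shift dealt with below).

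\textbf{Internal convection and pressure work.} For the internal--energy flux and the pressure--work term I would use a single argument. Reordering over interior faces and inserting \eqref{primal-mass-flux} together with the discrete gradient \eqref{eq:gradT}, each term becomes $-\int_0^T\!\!\int_\Omega q\exm\,\bfu\exm\cdot\gradi_{\edges\exm,\tdisc\exm}\varphi\dx\dt$, where $q\exm$ is piecewise constant on the $D_\edge$, equal to $\rho_\edge^{n+1}e_\edge^{n+1}$ in the first case and to $\tfrac12(p_K^{n+1}+p_L^{n+1})$ in the second. In both cases $q\exm$ converges to its limit ($\bar\rho\bar e$, resp.\ $\bar p$) in $L^1(\Omega\times(0,T))$: the difference between $q\exm$ and the associated cell reconstruction is controlled on each half--diamond cell by the jump of the corresponding cell quantity across $\edge$, whose weighted sum $\sum_n\delta t\exm\sum_{\edge=K|L}|D_\edge|\,|q_K-q_L|$ tends to zero by Lemma~\ref{lmm:trans}, estimate \eqref{eq:trans_prim}. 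Since $q\exm$ and $\bfu\exm$ converge in $L^1$ and are bounded in $L^\infty$, the product $q\exm\bfu\exm$ converges strongly in $L^1$, and pairing it with the weak--$\star$ convergence $\gradi_{\edges\exm,\tdisc\exm}\varphi\to\gradi\varphi$ in $L^\infty$ (Lemma~\ref{lmm:gradw}) yields $-\int\bar\rho\bar e\,\bar\bfu\cdot\gradi\varphi$ and $-\int\bar p\,\bar\bfu\cdot\gradi\varphi$. Adding these to the kinetic terms reproduces the $\bar\rho\bar E\,\partial_t\varphi$ and $(\bar\rho\bar E+\bar p)\,\bar\bfu\cdot\gradi\varphi$ contributions of \eqref{eq:Etot_weak}, with $\bar E=\tfrac12|\bar\bfu|^2+\bar e$.

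\textbf{Initial data: the main obstacle.} The delicate point, and the reason the extra hypotheses $\rho_0,e_0,\bfu_0\in W^{1,1}$ are imposed here but not in Theorem~\ref{theo:mom}, is the initial internal--energy contribution. Because in \eqref{eq:etot} the internal--energy balance is combined at the shifted level $n+2$ while the kinetic balance sits at level $n+1$, the discrete integration by parts in time leaves, for the internal energy, the \emph{first--step} value $(\rho e)_K^1$ rather than the initial value $(\rho e)_K^0$ (the kinetic part, being unshifted, correctly leaves $(\rho\Ekin)_K^0$). To identify the limit with $\rho_0 e_0$ I would show that the first--step increment is negligible: writing \eqref{eq:scheme_Eint} at $n=0$ with $S_K^0=0$ gives $\|(\rho e)^1-(\rho e)^0\|_{L^1(\Omega)}\le\delta t\exm\sum_K\bigl|\sum_\edge F_{K,\edge}^0 e_\edge^0+|K|\,p_K^0(\dive\bfu)_K^0\bigr|$, and the right--hand sum is a discrete $L^1$ norm of $\dive(\rho_0 e_0\,\bfu_0)+\dive(p_0\,\bfu_0)$ that is bounded uniformly in $m$ precisely because $\rho_0,e_0,\bfu_0\in W^{1,1}(\Omega)\cap L^\infty(\Omega)$; the prefactor $\delta t\exm\to0$ then annihilates the increment. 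Combined with the elementary convergence of the genuine initial reconstructions (for the kinetic part, $\tfrac12|\bfu_\edge^0|^2\to\tfrac12|\bfu_0|^2$ in $L^1$ follows already from $L^1\cap L^\infty$ convergence of the face averages of $\bfu_0$), this produces the term $\int_\Omega\rho_0\,(E)_0\,\varphi(\cdot,0)$ with $(E)_0=\tfrac12|\bfu_0|^2+e_0$. Securing this uniform $L^1$ control of the first--step discrete divergences is the crux; once it is obtained, collecting the limits of the previous paragraphs gives exactly \eqref{eq:Etot_weak} and concludes the proof.
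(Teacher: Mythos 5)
Your overall route is the paper's own: test \eqref{eq:etot} against $\delta t\exm\varphi_K^n$, split into the five terms, treat the kinetic unsteady/convective part through the reconstructed operator of Section~\ref{sec:prim_conv} via Lemmas~\ref{lmm:dt_term} and \ref{lmm:div_term}, handle the internal-energy and pressure fluxes with the discrete gradient of Lemma~\ref{lmm:gradw} plus Lemma~\ref{lmm:trans}, and use the $W^{1,1}$ hypothesis exactly as the paper does, i.e.\ to show $\|\rho^1e^1-\rho^0e^0\|_{L^1(\Omega)}\le C\,\delta t\exm$ so that the first-step value left by the time shift still converges to $\rho_0 e_0$. That last point, which you correctly single out as the crux, is precisely the paper's argument.

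There is, however, one genuine gap: your claim that Lemmas~\ref{lmm:dt_term} and \ref{lmm:div_term} apply \emph{verbatim} to the kinetic part with $z_\edge^n=\tfrac12|\bfu_\edge^n|^2$. The unsteady part is fine ($(\rho\,\Ekin)_K^n$ is indeed \eqref{eq:defrhoz} for that $z$), but the flux $(\Gkin)_\edge^n$ is \emph{not} of the form \eqref{eq_defG} for that $z$: it carries $|\bfu_\edged^n|^2$ at the dual faces, where $\bfu_\edged^n$ is a convex combination of $\bfu_\edge^n$ and $\bfu_{\edgeprim}^n$, and the square of a convex combination is not a convex combination of the squares. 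The hypothesis ``$z_\edged^n$ is a convex combination of $z_\edge^n$ and $z_{\edgeprim}^n$'' is used in the proof of Lemma~\ref{lmm:div_term} (to bound $|z_\edged^n-z_\edge^n|$ by $|z_{\edgeprim}^n-z_\edge^n|$ in the estimate of $T_2\exm$), so the lemma does not apply directly. The paper repairs this by introducing a modified flux $(\widetilde\Gkin)_\edge^n$ built on a genuine convex combination $(\Ekin)_\edged^n$ of $(\Ekin)_\edge^n$ and $(\Ekin)_{\edgeprim}^n$, applying Lemma~\ref{lmm:div_term} to the corresponding $\widetilde T_4\exm$, and showing that the remainder $R_4\exm=T_4\exm-\widetilde T_4\exm$ vanishes using the $L^\infty$ bound on the velocities (which gives $\bigl||\bfu_\edged^n|^2-(\Ekin)_\edged^n\bigr|\le C\,|\bfu_\edge^n-\bfu_{\edgeprim}^n|$) together with Lemma~\ref{lmm:trans}. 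Your argument needs this extra step; it is a short fix, but as written the passage to the limit in $T_4\exm$ is not justified.
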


\medskip
\begin{proof}
The convergence of $(p\exm)_{m\in \xN}$ to $\bar p$ is shown in the proof of Theorem \ref{theo:mom}.
We now pass to the limit in a weak form of Equation \eqref{eq:etot}.
Let $\varphi \in C^\infty_c(\Omega \times [0,T)$, $m\in\xN$ and $\varphi_K^n = \varphi(\bfx_K,t_n)$, for $K \in \mesh\exm$ and $0 \leq n \leq N\exm$, with $\bfx_K \in K$.
Multiplying \eqref{eq:etot} by $\delta t\exm \varphi_K^n$ and summing over the cells and time levels, we obtain, dropping the superscripts $\exm$ for short whenever it does not hinder comprehension,
\[\begin{array}{l}\displaystyle
T_1\exm + T_2\exm + T_3\exm + T_4\exm + T_5\exm=0, \mbox{ with }
\\[1ex] \qquad \displaystyle
T_1\exm=\sum_{n=0}^{N\exm-2} \sum_{K\in\mesh\exm} |K|\ (\rho_K^{n+2} e_K^{n+2} - \rho_K^{n+1} e_K^{n+1})\ \varphi_K^n,
\\[1ex] \qquad \displaystyle
T_2\exm=\sum_{n=0}^{N\exm-2} \sum_{K\in\mesh\exm} |K|\ \Bigl((\rho\, \Ekin)_K^{n+1} - (\rho\, \Ekin)_K^n \Bigr)\ \varphi_K^n,
\\[3ex] \qquad \displaystyle
\ T_3\exm=\sum_{n=0}^{N\exm-1} \delta t\exm \sum_{K\in\mesh\exm} \ \varphi_K^n \sum_{\edge \in \edges(K)}  |\edge|\ F_{K,\edge}^{n+1}\, e_\edge^{n+1},
\\[3ex] \qquad \displaystyle
\ T_4\exm=\sum_{n=0}^{N\exm-1} \delta t\exm \sum_{K\in\mesh\exm} \ \varphi_K^n \sum_{\edge \in \edges(K)} (\Gkin)_\edge^n,
\\[3ex] \qquad \displaystyle
\ T_5\exm=\sum_{n=0}^{N\exm-1} \delta t\exm \sum_{K\in\mesh\exm} \ \varphi_K^n \sum{\substack{\edge \in \edges(K)\\\edge=K|L}}  |\edge|\ \frac {p_K^{n+1} + p_L^{n+1}} 2\ \bfu_\edge^{n+1} \cdot \bfn_{K,\edge}.
\end{array}\]
Reordering the sums in $T_1\exm$ yields $T_1\exm=T_{1,1}\exm+T_{1,2}\exm$ with
\[\begin{array}{l} \displaystyle
T_{1,1}\exm=-\sum_{n=0}^{N\exm-2} \delta t\exm \sum_{K\in\mesh\exm} |K|\ \rho_K^{n+2} e_K^{n+2}\ \frac{\varphi_K^{n+1}-\varphi_K^n}{\delta t\exm},
\\[4ex] \displaystyle
T_{1,2}\exm=-\sum_{K\in\mesh\exm} |K|\ \rho_K^1 e_K^1\ \varphi_K^0.
\end{array}\]
By a proof similar to that of Step 2 in Lemma \ref{lmm:dt_term}, we get:
\[
\lim_{m \to +\infty} T_{1,1}\exm = -\int_0^T \int_\Omega \bar \rho\,\bar e\ \partial_t \varphi \dx \dt.
\]
The treatment of the term $T_{1,2}\exm$ is more intricate, since $\rho_K^1$ and $e_K^1$ are not directly related to the initial conditions; this is the reason why the $W^{1,1}$ regularity assumption on the initial conditions is needed.
Indeed, using the discrete internal energy balance \eqref{eq:scheme_Eint}, we get:
\[\begin{array}{l}\displaystyle
|| \rho^1 e^1-\rho^0 e^0||_{L^1(\Omega)} \leq \delta t\exm \ (||\dive_\mesh(\rho^0 e^0 \bfu^0)|| + ||p^0\, \dive(\bfu^0)||), \mbox{ with }
\\[2ex] \displaystyle \qquad
||\dive_\mesh(\rho^0 e^0 \bfu^0)||=\sum_{K\in\mesh\exm} |\sum_{\edge \in\edges(K)} F_{K,\edge}^n e^n_\edge|,
\\[2ex] \displaystyle \qquad
||p^0\,\dive(\bfu^0)||= \sum_{K\in\mesh\exm}|K|\ | p^n_K \,(\dive \bfu)^n_K|.
\end{array}\]
These last two terms are bounded if $\rho_0, e_0 \in W^{1,1}(\Omega)$ and $\bfu_0 \in W^{1,1}(\Omega)^d$ and thus, when $m$ converges to $+\infty$, $\rho^1\, e^1$ tends to $\rho_0\, e_0$ in $L^1(\Omega)$ and
\[
\lim_{m \to +\infty} T_{1,2}\exm = -\int_\Omega \rho_0\,e_0\ \varphi(\bfx,0) \dx.
\]

\medskip
The convergence of the sequence of discrete velocities in $L^1(\Omega\times(0,T))^d$ and its boundedness in $L^\infty(\Omega\times(0,T))^d$ yield that the sequence of discrete functions associated to the kinetic energy also converges in $L^1(\Omega\times(0,T))$ and is uniformly bounded, and that the limit is equal to $\frac 1 2 \,|\bar u|^2$.
In addition, thanks to the fact that the initial velocity $\bfu_0$ belongs to $L^\infty(\Omega)^d$, the sequence $(\frac 1 2\,|(\bfu\exm)^0|^2)_{m\in\xN}$ defined by \eqref{eq:inicond} converges to $\frac 1 2\,|\bfu_0|^2$. 
Lemma \ref{lmm:dt_term}, with $z=\Ekin$, thus shows that
\[
\lim_{m \to +\infty} T_2\exm = - \int_0^T \int_\Omega \frac 1 2\ \bar \rho\ |\bar\bfu|^2\ \partial_t \varphi \dx \dt
- \int_\Omega \rho_0\ |\bfu_0|^2\ \varphi(\bfx,0)\dx.
\]

\medskip
Standard arguments combining Lemmas \ref{lmm:trans} and \ref{lmm:gradw} show that
\[
\lim_{m \to +\infty} T_3\exm = -\int_0^T \int_\Omega \bar\rho\ \bar e\ \bar\bfu \cdot \gradi \varphi \dx \dt,\quad
\lim_{m \to +\infty} T_5\exm = -\int_0^T \int_\Omega \bar p\ \bar\bfu \cdot \gradi \varphi \dx \dt.
\]

\medskip
Let $(\widetilde \Gkin)_\edge^n$ be defined by:
\[
(\widetilde \Gkin)_\edge^n= - \sum_{\substack{\edged \in \edges(D_\edge)\\ \edged \subset K}} F_{\edge,\edged}^n\ (\Ekin)_\edged^n
+ \sum_{\substack{\edged \in \edges(D_\edge)\\ \edged \not \subset K}} F_{\edge,\edged}^n\ (\Ekin)_\edged^n,
\]
where, for $\edged=\edge|\edgeprim$, the quantity $(\Ekin)_\edged^n$ may be any convex combination of $(\Ekin)_\edge^n$ and $(\Ekin)_{\edgeprim}^n$, for instance $(\Ekin)_\edge^n$ with $D_\edge$ the upwind cell of $\edged$ with respect to $F_{\edge,\edged}^n$.
Let us define $\widetilde T_4\exm$ by:
\[
\widetilde T_4\exm=\sum_{n=0}^{N\exm-1} \delta t\exm \sum_{K\in\mesh\exm} \ \varphi_K^n  \sum{\edge \in \edges(K)} (\widetilde \Gkin)_\edge^n,
\]
Then, the convergence of the sequence of discrete kinetic energies already invoked for the term $T_2\exm$ implies that, thanks to Lemma \ref{lmm:div_term},
\[
\lim_{m \to +\infty} \widetilde T_4\exm = - \int_0^T \int_\Omega \frac 1 2 |\bar \bfu|^2\ \bar\bfu \cdot \gradi \varphi \dx \dt.
\]
Let us write $T_4\exm = \widetilde T_4\exm + R_4\exm$ with
\[
R_4\exm = \sum_{n=0}^{N\exm-1} \delta t\exm \sum_{K\in\mesh\exm} \ \varphi_K^n  \sum{\substack{\edge \in \edges(K)\\\edge=K|L}}  \Bigl[ (\Gkin)_\edge^n - (\widetilde \Gkin)_\edge^n \Bigr].
\]
The fact that $R_4\exm$ tends to zero when $m$ tends to $+\infty$ follows from the assumed boundedness of the unknowns and Lemma \ref{lmm:trans} and concludes the proof.
\end{proof}

\begin{remark}[On the $W^{1,1}$ assumption on the initial data]
	Note that the $W^{1,1}$ assumption on the initial data is needed because of the time shift on the pressure terms, and because we have taken into account the initial condition. 
	However, if we take the test function $\varphi \in C_c(\Omega\times(0,T))$ instead of $\varphi \in C_c(\Omega\times[0,T))$, then this assumption is not needed. 
	In particular, the  $W^{1,1}$ assumption on the initial data is not needed to recover the Rankine-Hugoniot condition.
\end{remark}

%
%
\bibliographystyle{amsplain}
\bibliography{cons} 
\end{document}